\newtheorem{theorem}{Theorem}[section]
\theoremstyle{definition}
\newtheorem{definition}[theorem]{Definition}
\newtheorem{remark}[theorem]{Remark}
\newtheorem{assumption}[theorem]{Assumption}
\newtheorem{lemma}[theorem]{Lemma}
\newcommand{\T}{\mathbb{T}}
\newcommand{\tb}[1]{\boldsymbol{#1}}
\DeclareMathOperator*{\argmin}{argmin}
\title{Learning operators for identifying weak solutions to the Navier-Stokes equations}
\author{
 Dixi Wang \\
  Department of Mathematics\\
  University of Florida\\
  \texttt{dixiwang@ufl.edu} \\
  %% examples of more authors
   \And
 Cheng Yu \\
  Department of Mathematics\\
  University of Florida\\
  \texttt{chengyu@ufl.edu} \\
}
\begin{document}
\setstretch{1.2}
\maketitle

\begin{abstract}
    This paper focuses on investigating the learning operators for identifying weak solutions to the Navier-Stokes equations. Our objective is to establish a connection between the initial data as input and the weak solution as output. 
    To achieve this, we employ a combination of deep learning methods and compactness argument to derive learning operators for weak solutions for any large initial data in 2D,  and for low-dimensional initial data in 3D. 
Additionally, we utilize the universal approximation theorem to derive a lower bound on the number of sensors required to achieve accurate identification of weak solutions to the Navier-Stokes equations.
Our results demonstrate the potential of using deep learning techniques to address challenges in the study of fluid mechanics, particularly in identifying weak solutions to the Navier-Stokes equations.
\end{abstract}

\section{Introduction}

The incompressible Navier-Stokes equations on the space-time domain $\mathbb{T}^d\times [0,T]$ is written as:
\begin{align} \label{ICNS}
    &\partial_t \tb{u} + \tb{u}\cdot \nabla \tb{u} + \nabla p -\nu\Delta \tb{u} = 0 ~~~~~~~~~~~~ (\tb{x},t)\in\mathbb{T}^d\times[0,T],\\
    &\text{div}~\tb{u}=0 ~~~~~~~~~~~~~~~~~~~~~~~~~~~~~~~~~~~~~~~~~~~(\tb{x},t)\in\mathbb{T}^d\times[0,T],
\end{align}
with initial data
\begin{align} \label{ini_ICNS}
    \tb{u}(\cdot,t)|_{t=0} = \tb{u}_0~~~~~\text{in}~\mathbb{T}^d.
\end{align}
Here $\mathbb{T}^d=\mathbb{R}^d/2\pi\mathbb{Z}^d$ is a periodic domain with $d=2~\text{or}~3$, $T>0$. 
$\tb{u}(\tb{x},t)$ is the velocity field, $p(\tb{x},t)$ is the pressure, and 
$\nu>0$ is the viscosity  coefficient.

The Navier-Stokes equations are fundamental in the fields of science and engineering. Despite extensive research, the mathematical understanding of the incompressible Navier-Stokes (ICNS) remains incomplete. One of the most significant challenges is the lack of global-in-time smooth solutions for ICNS in three dimensions, which has been a long-standing open problem. While global Leray-Hopf weak solutions have been proven to exist for dimensions $d\geq 2$, uniqueness has only been established in 2D, leaving the uniqueness of 3D Leray-Hopf solutions an open problem \cite{PLLions}. These unresolved issues highlight the complexity of the Navier-Stokes equations and the need for continued research in this area. Therefore, this paper focuses on investigating the learning operators for weak solutions to the Navier-Stokes equations, aiming to contribute to the understanding and development of this critical field.

Deep neural networks, as \textit{universal approximators} can approximate finite-dimensional continuous functions up to arbitrary accuracy \cite{barron_universal_1993}.  
In recent years, we have witnessed tremendous success of network-based PDE solvers working with smooth solutions in explicit form with high accuracy. For example, \textit{physics
informed neural networks} (PINNs) can solve both forward problems as well as inverse problems \cite{Raissi_2018},\cite{Raissi_2019}. When the solution attains singularities, \cite{WAN} parameterized both the weak solution and the test function by neural networks and learn the parameters in an adversarial manner governed by weak formulation. In \cite{Friedrich}, the authors proposed the Friedrichs learning to transform the PDE problem into a minmax optimization problem, their algorithm is capable to capture the discontinuity of the solution without a priori knowledge.

Although deep learning has achieved huge success in many applications, its statistical theory, particularly on operator approximations between infinite-dimensional spaces, is still limited. The universal approximation theorem, which allows networks to approximate operators between two compact subsets of infinite-dimensional Banach spaces, was first proved by \cite{chen_universal_1995}. Later  \cite{lu_deeponet_2021} introduced DeepONets, a combination of branch and trunk nets, that can approximate a continuous operator mapping from a compact subset of an infinite-dimensional space to another Banach space within any desired precision. However, neither of these results provides explicit estimation on the complexity of the networks involved.
In 2017, Yarotsky  \cite{yarotsky_error_2017} described a ReLU network architecture with a certain depth and complexity that can approximate any functions within the unit ball of the Sobolev space  $\mathcal{W}^{n,\infty}([0,1]^d)$  to any desired precision. \cite{liu_deep_2022} derived an upper bound on the generalization error for learning Lipschitz operators between infinite-dimensional Hilbert spaces by neural networks, which reveals a direct relationship between generalization error and the number of data samples. \cite{lanthaler_error_2022} weakened the assumptions on continuity and compactness of the input space and provided explicit bounds for DeepONets. \cite{jiao_deep_2023} derived an error bound for Hölder continuous functions, with a prefactor in the error bound dependent on the dimension in polynomial order.  Overall, these findings provide insights into the capabilities and limitations of neural networks in approximating operators between infinite-dimensional spaces and shed light on the complexity of the networks required for different levels of precision.

Both \cite{liu_deep_2022} and \cite{lanthaler_error_2022} employ a model reduction method, which is a common technique in data-driven learning for time-dependent parametrized PDEs as documented in the literature \cite{chinesta, bhattacharya}. In order to approximate maps between infinite-dimensional spaces, a typical first step is to reduce the dimensions of both input and output spaces using encoders and decoders (also known as projectors and reconstructors). Encoders are used to project high-dimensional data onto a lower-dimensional space, while decoders are used to map the reduced data back to the original space. Popular choices of encoders include orthogonal basis of a separable Hilbert space and PCA (principal component analysis) encoder. Basis encoders, such as orthogonal polynomials and trigonometric basis, are independent of the training data and only depend on the underlying Hilbert space. In contrast, PCA encoder is data-driven and has become increasingly popular in recent years due to its simplicity of implementation and good numerical performance \cite{shawe-taylor_eigenspectrum_2005, blanchard_statistical_2007}.

The Navier-Stokes equations present challenges for accurately capturing weak solutions using traditional numerical methods in various settings. Deep neural networks, on the other hand, offer a highly flexible approach for approximating solutions to some PDEs under different settings.  Hence, in this paper, we explore the use of deep learning operators to identify weak solutions of the Navier-Stokes equations, which can provide a more efficient and accurate approach to fluid dynamics problems.

%Inspired and built on \cite{bhattacharya} and \cite{lu_deeponet_2021}, we aim to learn a nonlinear operator mapping from the observed initial function space to the space of Leray-Hopf solution to (\ref{ICNS})-(\ref{ini_ICNS}) in both 2D and 3D cases via nonparametric regression estimation based on deep neural networks.  
%We also estimate the number of points (\textit{sensors}) needed in $[-\pi,\pi]^3$ to obtain a good accuracy in 3D case. 
%In \cite{lu_deeponet_2021}, The authors treated initial functions as input functions of an unknown operator and estimated the number of sensors when the input functions being Gaussian process. Later \cite{lanthaler_error_2022} analyzed the approximation error of DeepONet, and projected input functions from infinite dimensional space into an $m$-dimensional vector space, where $m$ denotes the number of sensors. In particular, $m\gg 1$. The high dimensionality of the input vector space affects the efficiency in realistic learning tasks. We adopt the concept of \textit{sensor} in a different setting: the input functions are encoded using eigenfunctions of Stokes operator into a $D$-dimensional vector space, $D$ is independent of $m$. Given an accuracy, we estimate the number of sensors based on the \textit{universal approximation theorem} \cite{chen_universal_1995}.

The current paper builds upon and is inspired by \cite{bhattacharya} and \cite{lu_deeponet_2021}, the aim of this paper is to use deep neural networks to learn a nonlinear operator that maps from an observed initial function space to the space of Leray-Hopf solutions to (\ref{ICNS})-(\ref{ini_ICNS}) in both 2D and 3D cases via nonparametric regression estimation. We also aim to estimate the number of sensors required to achieve good accuracy in the 3D case on $[-\pi,\pi]^3$.
In \cite{lu_deeponet_2021}, the authors treated initial functions as input functions of an unknown operator and estimated the number of sensors required when the input functions were modeled as Gaussian processes. Later, in \cite{lanthaler_error_2022}, the approximation error of DeepONet was analyzed, and input functions were projected from an infinite-dimensional space into an $m$-dimensional vector space, where $m$ is the number of sensors. However, the high dimensionality of the input vector space can limit the efficiency of learning in realistic tasks.
To address this issue, we adopt a different approach by encoding the input functions using eigenfunctions of the Stokes operator into a $D$-dimensional vector space, where $D$ is independent of $m$. Given a desired level of accuracy, we estimate the number of sensors required based on the \textit{universal approximation theorem} \cite{chen_universal_1995}.

The rest of the paper is organized as follows: In Section \ref{sec:notation}, we define weak solutions and deep neural networks. We introduce the learning framework by specifying the encoder and decoder for our work and making several assumptions on the operator learning problem,  Then we state our main results. In Section \ref{sec:main}, we first briefly describe the Galerkin approximation then we prove a key Lipschitz property of the solution operator at each Galerkin level. Based on this Lipschitz continuity, we prove a lemma on the network approximation error. The proof of Theorem \ref{thm:ICNS_2D} and Theorem \ref{thm:ICNS_3D_lowdim} follows. Finally in Section \ref{sec:data}, we prove Theorem \ref{thm:low bound} on an estimation of number of sensors needed to obtain a good accuracy.

\section{Notation, learning setting and statement of main results}\label{sec:notation}
\subsection{Weak solutions}
We briefly introduce Leray-Hopf weak solutions to the Navier-Stokes equations.
For convenience, we define the following function spaces:
\begin{align*}
&\mathcal{V} = \{\tb{\varphi}\in C_0^\infty (\T^d)~|~\text{div}~\tb{\varphi}=0\},\\
&H = \text{closure of}~ \mathcal{V} ~ \text{in}~ L^2(\T^d) ,\\
&V_s = \text{closure of}~\mathcal{V}~ \text{in}~H^s(\T^d),~~s>0.
\end{align*}

\begin{remark}
The space $V_s$ is equipped with the norm $\|\cdot\|_{V_s}=\|\cdot\|_{H^s_0(\T^d)}$. In particular, if $s=1$, denote $V=V_1$, and one can identify the space 
$V = \{v\in {H^1_0(\T^d)}~|\text{ div}~v = 0\}.
$
\end{remark}
The weak solution is given in the following sense:
\begin{definition}\label{def:leray}
For any $T>0$, $\tb{u}(t,\tb{x})$ is a Leray-Hopf weak solution of system \ref{ICNS}-\ref{ini_ICNS} if
\[
\tb{u}\in L^2(0,T;V) \cap L^\infty(0,T;H),~~\text{and}~\int_{\T^d} \tb{u}~\text{d}\tb{x} = 0,
\]
satisfies
\begin{align}\label{Leray}
\begin{split}&
    -\int^T_0 \int_{\T^d} \tb{u}\cdot \partial_t\tb{\phi}~ \text{d}\tb{x}\text{d}t + \int^T_0 \int_{\T^d} (\tb{u}\cdot \nabla)\tb{u}\cdot\tb{\phi}~ \text{d}\tb{x}\text{d}t +\nu\int^T_0 \int_{\T^d} \nabla \tb{u}:\nabla \tb{\phi} ~ \text{d}\tb{x}\text{d}t 
    \\&\quad\quad\quad\quad = \int_{\T^d} \tb{u}_0\cdot \tb{\phi}(0)~\text{d}\tb{x},~~~\text{for all}~~\tb{\phi}\in C^\infty_c(\mathbb{T}^d\times [0,T))\;\;\text{with}~ \text{div}\tb{\phi}=0 ,
    \end{split}
\end{align}
and the energy inequality
\begin{align}\label{energy_equality}
    \dfrac{1}{2}\int_{\T^d} |\tb{u}(\tb{x},t)|^2~\text{d}\tb{x} + \nu\int^T_0 \int_{\T^d} |\nabla \tb{u}(\tb{x},t)|^2 \text{d}\tb{x}\text{d}t \leq \dfrac{1}{2}\int_{\T^d} |\tb{u}_0(\tb{x})|^2~\text{d}\tb{x}.
\end{align}
\end{definition}

\subsection{Learning framework}

A feedforward neural network $f: \mathbb{R}^n \rightarrow \mathbb{R}$ is written as:
\begin{equation}\label{def:fnn}
f(\tb{x}) = W_L \cdot \sigma( W_{L-1}\cdots\sigma(W_1 \tb{x} + b_1)+\cdots+b_{L-1}) + b_L, ~~ \tb{x}\in\mathbb{R}^n,
\end{equation}
where $W_i$ and $b_i$ are the weight matrix and bias vector for layer $i=1,\dots,L$. $L$ denotes the depth of the network, and $\sigma$ is an activation function applied element-wise to the intermediate outputs. In this paper, we focus on neural networks using the rectified linear unit (ReLU) activation function:
\begin{equation*}
\sigma(\tb{x}) = (\max\{0,x_1\}, \max\{0,x_2\},\dots,\max\{0,x_n\}), ~~ \text{for}~\tb{x}=(x_1,\dots,x_n)\in\mathbb{R}^n.
\end{equation*}

We consider a network class $\mathcal{F}_{NN}$ that contains ReLU neural networks mapping from a compact domain of the input space $[-M,M]^n\subset\mathbb{R}^n$ to the output space $\mathbb{R}^{n'}, M>0$ as follows:
%In this setting, the constraints on cardinality and magnitude of weight parameters are not required as long as the sup-norm of each $f_k$ is bounded by some $M>0$. 
\begin{align}\label{def:FNN}
    \mathcal{F}_{NN}(n,n',L,r,M) &= \{\Gamma =[f^{(1)},f^{(2)},\dots,f^{(n')}]^T: \text{for each}~k=1,\dots,n',\nonumber\\
    & f^{(k)}(\tb{x}):[-M,M]^n\to\mathbb{R}~ \text{is in the form}~ (\ref{def:fnn})~\text{with}~L^{(k)}\leq L,~r^{(k)}\leq r
    \},
\end{align}
where $r^{(k)}$ denotes the number of weights and bias of $f^{(k)}$.

\subsection{Our learning setup}

Let $\mathbb{P}$ be a Borel probability measure supported on $H$, 
%, we assume that $\mathbb{P}$ is supported on a compact set $K\subset H$.
Let $S(N)=\{\tb{x}_i,\tb{y}_i\}^N_{i=1}$ be a training set from which $\{\tb{x}_i\}^N_{i=1}$ is a finite collection of $N$ i.i.d. draws from $\mathbb{P}$, and $\{\tb{y}_i\}^N_{i=1}$ are the corresponding output samples w.r.t. the push-forward measure $\Psi_\sharp \mathbb{P}$ and evaluated at a fixed time.
%Each $\tb{v}_i$ is added by a noise $\epsilon_i$. We assume that $\epsilon_i$ are i.i.d. sampled in the solution space $Y$, independent of $\tb{u}_i$. Moreover, $\mathbb{E}[\epsilon_i] = 0$ and there exists $\sigma>0$ such that $\|\epsilon_i\|_Y\leq \sigma$.
The training data is generated using pseudo-spectral methods.
%We aim to seek a learning operator that can approximate the weak solutions to the Naiver-Stokes equations using a finite training set $S(N)$ at a fixed time.
We denote by $Y:= L^2(0,T;V)$ as the space of outputs to simplify the notation and let $\Psi:H\to Y$ be the unknown operator mapping initial functions to the Leray-Hopf solutions of the ICNS. 
Instead of a direct attempt to approximating $\Psi$ between infinite-dimensional spaces, we exploit a finite-dimensional structure by reducing the dimensions of the input and output spaces using encoders and decoders.

The Galerkin approximation in Section \ref{sec:Galerkin} provides a natural choice of encoders and decoders for the underlying PDE. 
Let $d_H>0$ be the dimension of the encoded vector space for the input space $H$, we define the following induced encoder $\mathcal{E}^{d_H}:H\to\mathbb{R}^{d_H}$:
\begin{align}\label{encoder}
    \mathcal{E}^{d_H}(\tb{x}):=(\langle \tb{x}, \tb{w}_1\rangle, \langle \tb{x}, \tb{w}_2\rangle, \dots,\langle \tb{x},\tb{w}_{d_H}\rangle )^T,
\end{align}
where $\{\tb{w}_k\}^{d_H}_{k=1}$ are the first $d_H$ eigenfunctions of the Stokes operator, and $\langle \cdot,\cdot \rangle$ denotes the inner product on the underlying space. 
%And $\mathcal{E}^D(u_0)$ is a vector containing the first $D$ coefficients of the decomposition of $u_0$ using $\{w_k\}^D_{k=1}$. 
Conversely, we define the decoder $\mathcal{D}^{d_H}:\mathbb{R}^{d_H}\to H$ by 
\begin{align}\label{decoder}
    \mathcal{D}^{d_H}(\textbf{a}) := \sum^{d_H}_{i=1} a_i \tb{w}_i, ~~\text{for any}~~\textbf{a} = (a_1,a_2,\dots,a_{d_H})\in\mathbb{R}^{d_H}.
\end{align}
Denote $\Pi_{d_H}:= \mathcal{D}^{d_H}\circ\mathcal{E}^{d_H}$. It should be satisfied that
\begin{align}\label{Pi_H}
    \mathcal{E}^{d_H}\circ\mathcal{D}^{d_H} = \text{Id}_{\mathbb{R}^{d_H}}: \mathbb{R}^{d_H}\to \mathbb{R}^{d_H},~~~\text{and}~~~
    \Pi_{d_H} \approx \text{Id}_H: H\to H.
\end{align}

Suppose the dimension of the encoded vector space for the solution space $Y$ is $d_Y$. We define the encoder $\mathcal{P}^{d_Y}:Y\to\mathbb{R}^{d_Y}$ and the decoder $\mathcal{R}^{d_Y}:\mathbb{R}^{d_Y}\to Y$ similarly:
\begin{align}\label{Y:encoder}
    \mathcal{P}^{d_Y}(\tb{y}) &:=(\langle \tb{y}, \tb{w}_1\rangle, \langle \tb{y}, \tb{w}_2\rangle, \dots,\langle \tb{y},\tb{w}_{d_Y}\rangle )^T \in \mathbb{R}^{d_Y}, ~~\text{for any}~~\tb{y}\in Y, \nonumber\\
    \mathcal{R}^{d_Y}(\textbf{b}) &:= \sum^{d_Y}_{i=1} b_i \tb{w}_i, ~~\text{for any}~~\textbf{b} = (b_1,b_2,\dots,b_{d_Y})\in\mathbb{R}^{d_Y}.
\end{align}
Here, we regard each $b_i$ as an evaluation of $d^{d_Y}_i(t)$ defined in (\ref{ICNSdef:u_m}) at a fixed time.
Denote $\Pi_{d_Y}:=\mathcal{R}^{d_Y}\circ\mathcal{P}^{d_Y}$. Again, it should be satisfied that
\begin{align}\label{Pi_Y}
    \mathcal{P}^{d_Y}\circ\mathcal{R}^{d_Y} = \text{Id}_{\mathbb{R}^{d_Y}}: \mathbb{R}^{d_Y}\to \mathbb{R}^{d_Y},~~~\text{and}~~~
    \Pi_{d_Y} \approx \text{Id}_Y: Y\to Y.
\end{align}

We have the following lemma for the properties of these operators, which is necessary in our proof later.
\begin{lemma}\label{encoder lips}
    $\mathcal{E}^{d_H}$, $\mathcal{D}^{d_H}$, $\mathcal{P}^{d_Y}$, $\mathcal{R}^{d_Y}$ are Lipschitz operators with Lipschitz coefficients 1.
\end{lemma}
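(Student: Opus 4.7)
The plan is to verify each of the four Lipschitz bounds by a short, direct calculation that relies solely on the fact that the Stokes eigenfunctions $\{\tb{w}_k\}$ form an orthonormal system in the relevant Hilbert space, together with Bessel's / Parseval's identity. Each of the four maps is essentially a projection onto, or a synthesis from, a finite orthonormal family, so the Lipschitz constant 1 should fall out as an isometry-type statement for the decoders and as a Bessel-type inequality for the encoders.

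First I would handle the input encoder. For $\tb{x},\tb{x}'\in H$, I would expand
\begin{equation*}
    \|\mathcal{E}^{d_H}(\tb{x})-\mathcal{E}^{d_H}(\tb{x}')\|_{\mathbb{R}^{d_H}}^2 \;=\; \sum_{i=1}^{d_H}|\langle \tb{x}-\tb{x}',\tb{w}_i\rangle|^2,
\end{equation*}
and then apply Bessel's inequality against the orthonormal family $\{\tb{w}_1,\dots,\tb{w}_{d_H}\}\subset H$ to dominate this by $\|\tb{x}-\tb{x}'\|_H^2$. Next, for the decoder $\mathcal{D}^{d_H}$ and $\tb{a},\tb{a}'\in\mathbb{R}^{d_H}$, I would write $\mathcal{D}^{d_H}(\tb{a})-\mathcal{D}^{d_H}(\tb{a}')=\sum_{i=1}^{d_H}(a_i-a_i')\tb{w}_i$ and use the orthonormality of $\{\tb{w}_i\}$ in $H$ to compute $\|\mathcal{D}^{d_H}(\tb{a})-\mathcal{D}^{d_H}(\tb{a}')\|_H^2=\sum_{i=1}^{d_H}|a_i-a_i'|^2$, so that $\mathcal{D}^{d_H}$ is in fact an isometry.

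The arguments for $\mathcal{P}^{d_Y}$ and $\mathcal{R}^{d_Y}$ are identical in structure: the encoder estimate is Bessel's inequality, and the decoder estimate is the Parseval identity for a finite orthonormal expansion. One only needs to read the inner product $\langle \tb{y},\tb{w}_i\rangle$ and the output norm consistently in the fixed-time sense described in the paragraph preceding (\ref{Pi_Y}), i.e.\ as the spatial $L^2$ pairing at the evaluation time, so that $\{\tb{w}_i\}$ is orthonormal in the ambient Hilbert space against which one computes both quantities.

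I do not expect a real obstacle here, as the lemma is a structural fact about orthonormal projections. The only point requiring a small amount of care is the interpretation of the $Y$-inner product used in the definition of $\mathcal{P}^{d_Y}$, since $Y=L^2(0,T;V)$ is a space of time-dependent functions whereas $\mathcal{R}^{d_Y}$ outputs a purely spatial element; once this is fixed in the natural way, both estimates reduce to Bessel / Parseval and no further calculation is needed.
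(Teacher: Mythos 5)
Your proposal is correct and follows essentially the same route as the paper: Bessel's inequality for the encoders and orthonormality (Parseval) for the decoders, with the paper likewise dispatching $\mathcal{P}^{d_Y}$ and $\mathcal{R}^{d_Y}$ by "a similar argument." Your remark about fixing the interpretation of the inner product on $Y=L^2(0,T;V)$ is a reasonable point of care that the paper itself leaves implicit, but it does not change the argument.
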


\begin{proof}
    Let $\tb{x}_1,\tb{x}_2\in H$. Since $H$ is a separable Hilbert space and the eigenfunctions $\{\tb{w}_k\}^\infty_{k=1}$ of the Stokes operator $-\Delta$ form an orthonormal basis of $H$, we have
\begin{align*}
    \|\mathcal{E}^{d_H}(\tb{x}_1)-\mathcal{E}^{d_H}(\tb{x}_2)\|^2_2 = \sum^{d_H}_{i=1}|\langle \tb{x}_1-\tb{x}_2,\tb{w}_i\rangle |^2 \leq \sum^\infty_{i=1}|\langle \tb{x}_1-\tb{x}_2,\tb{w}_i\rangle |^2 = \|\tb{x}_1-\tb{x}_2\|^2_H.
\end{align*}
For any $\tb{a},\Bar{\tb{a}}\in \mathbb{R}^{d_H}$,
\begin{align*}
    \|\mathcal{D}^{d_H}(\tb{a})-\mathcal{D}^{d_H}(\Bar{\tb{a}})\|^2_H = \|\sum^{d_H}_{i=1}(a_i-\Bar{a}_i)w_i\|^2_H = \|\tb{a}-\Bar{\tb{a}}\|^2_2.
\end{align*}
A similar argument gives the Lipschitz continuity of $\mathcal{P}^{d_Y}$, $\mathcal{R}^{d_Y}$ and their Lipschitz constants 1.
\end{proof}

Once we define the encoders and decoders, a finite-dimensional map on the discretization grid can be learned by deep neural networks. The whole process is illustrated in the following diagram:

\begin{center}
\begin{tikzcd}[row sep=4em,column sep=4em]
    H \arrow[r,blue, "\Psi",blue] \arrow[d, "\mathcal{D}^{d_H}"]
& Y \arrow[d, "\mathcal{R}^{d_Y}"] \\
\mathbb{R}^{d_H} \arrow[r, red, "\Gamma" red]
\arrow[u,shift right,"\mathcal{E}^{d_H}"]
& \mathbb{R}^{d_Y}\arrow[u,shift right,"\mathcal{P}^{d_Y}"]
 \end{tikzcd}
\end{center}

%Since the exact solution is not available in our work, given $\tb{u}_i\in H$ i.i.d. serving as initial functions, we use Chebfun package \cite{Chebfun} together with explicit Runge-Kutta temporal integrator of suitable time-step size to generate $\tb{v}_i$.
%Then we add an artificial noise $\epsilon_i$ to $\tb{v}_i$. 

%A minimizer $\Gamma_{NN}\in \mathcal{F}_{NN}$ is chosen to minimize the following empirical mean squared error (or $L_2$ risk) 
%\begin{align}\label{minimizer}
%    \Gamma_{NN} \in \argmin_{\Gamma\in\mathcal{F}_{NN}}\dfrac{1}{n}\sum^n_{i=1}\|\Gamma\circ E^D(u_i) - E^K(v_i)\|^2_2.
%\end{align}

\subsection{Main results}

We state our main results in this subsection. First, we need following assumptions.
\begin{assumption}\label{ass:energy}
For every initial function $\tb{u}_0\in H$, 
\begin{align}\label{icns:initial_energy}
        \|\tb{u}_0\|_H\leq R,~~ \text{for some}~R\in\mathbb{R}^+.
    \end{align}
\end{assumption}
Thus initial functions are chosen from the ball $B_R(H):=\{\tb{x}\in H;~\|\tb{x}\|_H\leq R\}$.
In 3D case, we need the following restriction on the initial function:
\begin{assumption}\label{ass:lowdim}
    Let $\{\tb{w}_k\}^\infty_{k=1}$  be the orthonormal basis of $H$. There exists $D\in\mathbb{N}^+$  such that every initial function $\tb{u}_0$ satisfies $$\tb{u}_0\in \text{span}\{\tb{w}_1,\tb{w}_2,\dots\,\tb{w}_D\}\subset H.$$ We call such initial data as low-dimensional initial data.
\end{assumption}

\begin{remark}
    As one shall see in Lemma \ref{ICNS:lips},
    Assumption \ref{ass:lowdim} imposes an uniform upper bound on the Lipschitz constant $L_{\Psi^k}, k=1,2,\dots$, in 3D case so that both the network approximation error and the projection error can be controlled given an accuracy $\epsilon$. The low-dimensional structure has been proposed in several examples to improve the convergence rate significantly (\cite{baraniuk_random_2009},\cite{jiao_deep_2023}), and has been observed in applications such as image processing. In \cite{chinesta}, the authors analyzed the reduced basis method on various PDE models, and discussed when it leads to a good approximation error.   
\end{remark}

For 2D Leray-Hopf solutions, we have the following result:

%\begin{theorem}\label{thm:ICNS_2D}

%Suppose all the assumptions on the probability measure $\mathbb{P}$ and the training data $S(N)$ hold, and the encoders and decoders are given in (\ref{encoder}),(\ref{decoder}) and (\ref{Y:encoder}). 
%Let $u_0\in H$ denote the initial condition uniformly bounded as in (\ref{icns:initial_energy}). 
%Let $\Psi:H\to Y$ be a $\mathbb{P}$-measurable operator mapping 
%$u_0\in H$ to the Leray-Hopf weak solution to the 2D incompressible Navier Stokes equation (\ref{ICNS})-(\ref{ini_ICNS}), where $u_0$ is a initial function satisfying Assumption \ref{ass:energy}. 
%Let $\epsilon>0$, 
%there exist a network class $\mathcal{F}_{NN}(N,L,p,M)$ in the form (\ref{def:FNN}),and a network $\Gamma_{NN}:\mathbb{R}^D\to \mathbb{R}^N$ in $\mathcal{F}_{NN}$ such that 
%\begin{align}\label{ineq:thm_icns2D}
%    \mathbb{E}_{\mathcal{S}(N)}\mathbb{E}_{u_0\sim\mathbb{P}}\|D^N\circ \Gamma_{NN}\circ E^D(u_0) - \Psi(u_0)\|^2_Y < \epsilon
%\end{align}
%for a requisite amount of data $n$, and some fixed $D, N\in\mathbb{N}\backslash \{0\}$ sufficient large.

%\end{theorem}

\begin{theorem}\label{thm:ICNS_2D}
    %Suppose all the assumptions on the probability measure $\mathbb{P}$ hold. Let $\{\tb{x}_i\}^N_{i=1}$ be $N$ i.i.d. draws from $\mathbb{P}$ satisfying Assumption \ref{ass:energy} and
    Let $\mathbb{P}$ be a Borel probability measure supported on $B_R(H)$ and 
    $\Psi:H\to Y$ be a Borel measurable operator mapping initial function $\tb{x}\in B_R(H)$ to the Leray-Hopf weak solution to the 2D incompressible Navier-Stokes equations (\ref{ICNS})-(\ref{ini_ICNS}). Let $\epsilon>0$, there exist $d_H(L_\Psi,\epsilon), d_Y(\epsilon)>0, N=N(d_H,d_Y)$, a ReLU network $\Gamma_{NN}\in\mathcal{F}_{NN}(d_H,d_Y,L,r, \frac{R}{\sqrt{d_H}})$ with $L=L(d_H,d_Y,L_\Psi,R,\epsilon)$ and $r = r(d_H,d_Y,L_\Psi,R,\epsilon)$ such that 
    \begin{align}\label{ineq:thm_icns2D}
        \mathbb{E}_{\tb{x}_i\sim\mathbb{P}}\mathbb{E}_{\tb{x}\sim\mathbb{P}}\|\mathcal{R}^{d_Y}\circ \Gamma_{NN} \circ \mathcal{E}^{d_H}(\tb{x}) - \Psi(\tb{x})\|^2_Y < \epsilon.
    \end{align}
    %where $\mathcal{R}^{d_Y}$, $\mathcal{E}^{d_H}$ are defined in (\ref{Y:encoder}), (\ref{encoder}), and $L=L(d_H,d_Y,L_\Psi,R,\epsilon)$, $r = r(d_H,d_Y,L_\Psi,R,\epsilon)$.
\end{theorem}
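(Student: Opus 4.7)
The plan is to split the pointwise error by the triangle inequality and control each resulting piece by choosing, in order, the encoding dimensions $d_H,d_Y$, the network architecture, and the sample size $N$. The decisive simplification in 2D is that the Galerkin Lipschitz constant $L_{\Psi^N}$ furnished by Lemma~\ref{ICNS:lips} is independent of the Galerkin order $N$, so the limiting operator $\Psi:B_R(H)\to Y$ is itself Lipschitz with some constant $L_\Psi=\exp(CR^2)$; this uniform bound is used crucially throughout.

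Setting $\Pi_{d_H}=\mathcal{D}^{d_H}\circ\mathcal{E}^{d_H}$ and $\Pi_{d_Y}=\mathcal{R}^{d_Y}\circ\mathcal{P}^{d_Y}$, I would write
\begin{align*}
\|\mathcal{R}^{d_Y}\Gamma_{NN}\mathcal{E}^{d_H}(\tb{x})-\Psi(\tb{x})\|_Y
&\leq \|\mathcal{R}^{d_Y}\Gamma_{NN}\mathcal{E}^{d_H}(\tb{x})-\Pi_{d_Y}\Psi(\Pi_{d_H}\tb{x})\|_Y \\
&\quad +\|\Pi_{d_Y}\Psi(\Pi_{d_H}\tb{x})-\Psi(\Pi_{d_H}\tb{x})\|_Y + L_\Psi\|\Pi_{d_H}\tb{x}-\tb{x}\|_H .
\end{align*}
Taking expectations, the third piece is the \emph{input projection error}: since $\{\tb{w}_k\}$ is an orthonormal basis of $H$ and $\mathbb{P}$ is supported on $B_R(H)$, dominated convergence yields $\mathbb{E}_{\tb{x}\sim\mathbb{P}}\|\Pi_{d_H}\tb{x}-\tb{x}\|_H^2\to 0$ as $d_H\to\infty$, so $d_H=d_H(L_\Psi,\epsilon)$ can be chosen so this contribution falls below $\epsilon/3$. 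The second piece is the \emph{output projection error}; the energy inequality gives $\|\Psi(\Pi_{d_H}\tb{x})\|_Y\leq C(\nu)R$ uniformly, and the Stokes eigenfunction basis is complete in $V$, so another dominated convergence argument fixes $d_Y=d_Y(\epsilon)$ with contribution below $\epsilon/3$.

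For the first piece I would invoke the network-approximation-and-generalization lemma promised in Section~\ref{sec:main}. The induced finite-dimensional map $F:=\mathcal{P}^{d_Y}\circ\Psi\circ\mathcal{D}^{d_H}$ is Lipschitz with constant at most $L_\Psi$ by Lemma~\ref{encoder lips}, and by that same lemma
\[
\|\mathcal{R}^{d_Y}\Gamma_{NN}\mathcal{E}^{d_H}(\tb{x})-\Pi_{d_Y}\Psi(\Pi_{d_H}\tb{x})\|_Y \leq \|\Gamma_{NN}(\mathcal{E}^{d_H}\tb{x})-F(\mathcal{E}^{d_H}\tb{x})\|_2 .
\]
Choosing $\Gamma_{NN}$ as the empirical risk minimizer over the $N$ i.i.d. training samples inside $\mathcal{F}_{NN}(d_H,d_Y,L,r,R/\sqrt{d_H})$, the lemma yields a bound of the schematic form ``estimation error plus projection errors''; once $d_H,d_Y$ are fixed, the estimation part is made smaller than $\epsilon/3$ by taking $L,r,N$ large enough, with the explicit dependence dictated by $d_H,d_Y,L_\Psi,R$ and a polylogarithmic factor in $N$.

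The main obstacle I foresee is promoting the uniform-in-$N$ Lipschitz bound from $\Psi^N$ to the limit operator $\Psi$ in the $L^2(0,T;V)$ topology; this requires the strong convergence $\Psi^N(\Pi_N\tb{x})\to\Psi(\tb{x})$ in $Y$, which in 2D follows from the classical Galerkin compactness argument, combined with a passage to the limit in the Lipschitz inequality. A secondary bookkeeping concern is the consistency of the quantifier order: enlarging $d_H$ to suppress the input-projection error raises the ambient dimension on which $F$ must be approximated and hence the required $L,r,N$, but since $d_H,d_Y$ are fixed before the network size is chosen, this remains a tractable exercise.
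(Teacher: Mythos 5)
Your proposal is correct and follows essentially the same route as the paper: the same three-way decomposition into a finite-dimensional network approximation term and input/output projection errors, the same crucial use of the $m$-independent Lipschitz constant from Lemma~\ref{ICNS:lips} (promoted to $\Psi$ as in Remark~\ref{2D_psi_lips}), and the same Yarotsky-based construction packaged in Lemma~\ref{lemma:auxi}. The only cosmetic differences are that the paper routes the projection errors through the empirical Monte Carlo bound of Lemma~\ref{lemma:empirical proj} (which is where the choice of $N$ and the expectation over $\tb{x}_i$ enter), whereas you control the population projection errors directly by dominated convergence, and the paper constructs $\Gamma_{NN}$ explicitly rather than via empirical risk minimization.
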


%Under Assumption \ref{ass:lowdim}, it is clear that the dimension of encoding vector space is $D$. Follow the construction of $u^k$ for each Galerkin order $k$, $u^k(t)$ lies in $\text{span}\{w_1,w_2,\dots\,w_D\}$. Thus the dimension of the decoding vector space is also $D$. Under the low dimensional assumption,
For 3D Leray-Hopf solutions, we have the following result:

\begin{theorem}\label{thm:ICNS_3D_lowdim}
    Suppose Assumption \ref{ass:lowdim} holds. Let $\mathbb{P}$ be a Borel probability measure supported on $B_R(H)$  and $\Psi:H\to Y$ be a Borel measurable operator mapping initial function $\tb{x}\in B_R(H)$ to the Leray-Hopf weak solution to the 3D incompressible Navier-Stokes equations (\ref{ICNS})-(\ref{ini_ICNS}). Let $\epsilon>0$, there exist $d_H(L_\Psi,\epsilon)\geq D, d_Y(\epsilon)>0, N=N(d_H,d_Y)$, a ReLU network $\Gamma_{NN}\in\mathcal{F}_{NN}(d_H,d_Y,L,r, \frac{R}{\sqrt{d_H}})$ with $L=L(d_H,d_Y,L_\Psi,R,\epsilon)$ and $r = r(d_H,d_Y,L_\Psi,R,\epsilon)$ such that 
    \begin{align}\label{ineq:thm_icns3D}
        \mathbb{E}_{\tb{x}_i\sim\mathbb{P}}\mathbb{E}_{\tb{x}\sim\mathbb{P}}\|\mathcal{R}^{d_Y}\circ \Gamma_{NN} \circ \mathcal{E}^{d_H}(\tb{x}) - \Psi(\tb{x})\|^2_Y < \epsilon.
    \end{align}
    %where $\mathcal{R}^{d_Y}$, $\mathcal{E}^{d_H}$ are defined in (\ref{Y:encoder}), (\ref{encoder}) and $L=L(d_H,d_Y,L_\Psi,R,\epsilon)$, $r = r(d_H,d_Y,L_\Psi,R,\epsilon)$.
\end{theorem}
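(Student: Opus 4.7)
The plan is to reduce the 3D problem to an effectively finite-dimensional setting by exploiting Assumption~\ref{ass:lowdim}, and then apply the same neural-network approximation machinery that underlies Theorem~\ref{thm:ICNS_2D}. Under the low-dimensional hypothesis, every initial datum $\tb{u}_0$ lies in the finite-dimensional subspace $\mathrm{span}\{\tb{w}_1,\dots,\tb{w}_D\}$. I would first argue that the Galerkin approximations stabilize under this restriction: the Galerkin ODE system starting from $\tb{u}_0=P_N\tb{u}_0=P_D\tb{u}_0$ with $N\ge D$ collapses onto the $D$-th level, so $\tb{u}^N=\tb{u}^D$ for every $N\ge D$ and the weak solution satisfies $\Psi(\tb{u}_0)=\Psi^D(\tb{u}_0)$. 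This is what makes the 3D case tractable: the limiting operator $\Psi$ is now a Lipschitz map (with constant $L_\Psi$ depending on $D$, $R$, and $\nu$, obtained from the forthcoming Galerkin-level Lipschitz estimate), which is essential because in general the 3D Lipschitz constants blow up exponentially with the Galerkin order.

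With $\Psi$ Lipschitz on the compact set $\{\tb{u}_0\in\mathrm{span}\{\tb{w}_1,\dots,\tb{w}_D\}:\|\tb{u}_0\|_H\le R\}$, the network construction proceeds in the usual three-step scheme. Choose $d_H\ge D$ so that $\mathcal{D}^{d_H}\circ\mathcal{E}^{d_H}$ acts as the identity on the support of $\mathbb{P}$, giving zero input-projection error. Choose $d_Y=d_Y(\epsilon)$ large enough that the output projection error $\mathbb{E}_{\tb{x}\sim\mathbb{P}}\|\Pi_{d_Y}\Psi(\tb{x})-\Psi(\tb{x})\|_Y^2<\epsilon/2$, which is possible since $\{\tb{w}_k\}$ is an orthonormal basis of $Y$. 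Then apply a Yarotsky-type universal approximation theorem to the Lipschitz finite-dimensional map $F:=\mathcal{P}^{d_Y}\circ\Psi\circ\mathcal{D}^{d_H}:[-R/\sqrt{d_H},R/\sqrt{d_H}]^{d_H}\to\mathbb{R}^{d_Y}$ to obtain a ReLU network $\Gamma_{NN}\in\mathcal{F}_{NN}(d_H,d_Y,L,r,R/\sqrt{d_H})$ with $\|F-\Gamma_{NN}\|_\infty<\epsilon/2$; the input bound $M=R/\sqrt{d_H}$ comes from $\|\mathcal{E}^{d_H}(\tb{u}_0)\|_2\le\|\tb{u}_0\|_H\le R$, while $L$ and $r$ are determined by $d_H,d_Y,L_\Psi,R,\epsilon$. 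Summing the three error contributions using the triangle inequality together with the $1$-Lipschitz property of $\mathcal{R}^{d_Y}$ from Lemma~\ref{encoder lips} yields the bound~\eqref{ineq:thm_icns3D}.

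The main obstacle is justifying the dimensional-closure step $\tb{u}^N=\tb{u}^D$ for $N>D$. A priori this is not automatic: the bilinear convection term $P_N[(\tb{u}^N\cdot\nabla)\tb{u}^N]$ can excite Fourier modes outside $\mathrm{span}\{\tb{w}_1,\dots,\tb{w}_D\}$ because the product $\tb{w}_i\cdot\nabla\tb{w}_j$ of two low-mode eigenfunctions can produce wavenumbers with index up to $N$. Establishing the required closure therefore needs either a structural property of the interaction coefficients on $\mathbb{T}^d$ restricted to the chosen $D$ modes, or a more refined argument that takes $N$ large, controls the Galerkin convergence $\tb{u}^N\to\tb{u}$ in $L^2(0,T;H)$, and balances this error against a network complexity that grows exponentially in the top eigenvalue $\lambda_N$ through the Lipschitz constant of $\Psi^N$. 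Managing this balance — which is unnecessary in 2D where the Lipschitz constant is already independent of the Galerkin order — is the delicate technical point that makes Assumption~\ref{ass:lowdim} indispensable in the 3D setting.
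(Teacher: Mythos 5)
Your construction is the same as the paper's: under Assumption \ref{ass:lowdim} identify $\Psi$ with $\Psi^D$, use Lemma \ref{ICNS:lips} to get a finite Lipschitz constant $L_{\Psi^D}=\exp(CR^4\lambda_D)$ because the truncation level is pinned at $D$, take $d_H=D$ (the paper) or $d_H\ge D$ (you) so the input projection error vanishes, choose $d_Y$ and $N$ via Lemma \ref{lemma:empirical proj} so the output projection contribution is below a fixed fraction of $\epsilon$, and then apply the network approximation lemma (Lemma \ref{lemma:auxi}, i.e.\ the Yarotsky-based construction) with $\delta=\sqrt{\epsilon}/2$, assembling the pieces by the triangle inequality and the $1$-Lipschitz property of $\mathcal{R}^{d_Y}$, with the input box $M=R/\sqrt{d_H}$ coming from $\|\mathcal{E}^{d_H}(\tb{u}_0)\|_2\le R$. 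On all of these points your proposal matches the paper's proof step for step.

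The only divergence is that you flag the closure identity $\tb{u}^N=\tb{u}^D$ for $N\ge D$ (equivalently $\Psi(\tb{u}_0)=\Psi^D(\tb{u}_0)$) as the main unresolved obstacle, whereas the paper treats it as immediate: its proof simply asserts that the Galerkin scheme produces the sequence $\{\tb{u}^1,\dots,\tb{u}^D,\tb{u}^D,\dots\}$ and that the Leray-Hopf solution equals $\tb{u}^D$, with no discussion of the interaction coefficients. Your concern is mathematically legitimate: for $N>D$ the system (\ref{Galerkin_equation}) projects the nonlinearity with $P_N$ rather than $P_D$, and products of the first $D$ trigonometric eigenfunctions do generate wavenumbers outside the first $D$ modes, so $\tb{u}^N$ need not remain in $\mathrm{span}\{\tb{w}_1,\dots,\tb{w}_D\}$; moreover $\tb{u}^D$ by itself solves only the truncated equation, not the weak formulation (\ref{Leray}). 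The paper does not carry out the alternative "large $N$ versus exponentially growing $L_{\Psi^N}$" balancing you sketch either; it rests entirely on the asserted closure. So your proposal reproduces the paper's argument in full, and the step you were unable to justify is exactly the step the paper leaves unjustified — identifying it is a fair criticism of the published proof rather than a gap peculiar to your write-up.
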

%\begin{theorem}\label{thm:ICNS_3D_lowdim}

%Suppose all the assumptions on the probability measure $\mathbb{P}$ and the training data $S(n)$ hold, and the encoder $E^D$ and the decoder $D^D$ are given in (\ref{encoder})-(\ref{decoder}). 

%Denote $\Psi:H\to Y$ a $\mathbb{P}$-measurable operator mapping 
%$u_0\in H$ to the Leray-Hopf solution to the incompressible Navier Stokes equation (\ref{ICNS})-(\ref{ini_ICNS}), where $u_0$ is an initial function satisfying Assumption \ref{ass:energy} and \ref{ass:lowdim}. 
%Let $\epsilon>0$, 
%there exist a network class $\mathcal{F}_{NN}(N,L,p,M)$ in the form (\ref{def:FNN}), a finite network $\Gamma_{NN}:\mathbb{R}^D\to \mathbb{R}^D$ in $\mathcal{F}_{NN}$ such that 
%\begin{align}\label{ineq:thm_icns}
%    \mathbb{E}_{\mathcal{S}(n)}\mathbb{E}_{u_0\sim\mathbb{P}}\|D^D\circ \Gamma_{NN}\circ E^D(u_0) - \Psi(u_0)\|^2_Y < \epsilon
%\end{align}
%for a requisite amount of data $n$.

%\end{theorem}
The double expectation averages over new inputs $\tb{x}$ drawn from $\mathbb{P}$ and over realizations of the i.i.d. data $S(N)$. Finally,
we give an estimation on data generation of the approximation theory to 3D Leray-Hopf solutions. 
One can choose uniformly $m+1$ points $\tb{x}_0,\tb{x}_1,\dots,\tb{x}_m \in [-\pi,\pi]^3$ as location of sensors. From Section \ref{sec:Galerkin}, we know $\tb{w}_i\in C^\infty_c(\T^3)$ for $i=1,2,\dots, D$. We define a function $\tb{w}^m_i(\tb{x})$ to be the trilinear interpolation of $\tb{w}_i(\tb{x})$ based on $\tb{x}_0,\tb{x}_1,\dots,\tb{x}_m$.
Using Taylor expansion, there exists a constant $\kappa(D,m)>0$ such that
\begin{align}
    \max_{\tb{x}\in [-\pi,\pi]^3} |\tb{w}_i(\tb{x})-\tb{w}^m_i(\tb{x})|\leq \kappa(D,m), 
\end{align}
where $\kappa(D,m)\to 0$ as $m\to\infty$.

\begin{theorem}\label{thm:low bound} 
Let $\epsilon>0$.
Suppose that Assumption \ref{ass:energy} and Assumption \ref{ass:lowdim} hold for some $R, D >0$ and $m$ is a positive integer making $$C_1e^{CR^4\lambda_D}\kappa(D,m)< \frac{1}{2}\epsilon,$$
where $C_1=C_1(D,R)>0$. Let $\Psi:H\to Y$ be the operator mapping initial function $\tb{u}_0\in B_R(H)$ to the Leray-Hopf weak solution to the 3D incompressible Navier-Stokes equations (\ref{ICNS})-(\ref{ini_ICNS}).
Then there exist $d_H, d_Y, k>0$ and a depth-2 ReLU neural network with weights and bias $W_1\in \mathbb{R}^{k\times d_H}$, $b_1\in\mathbb{R}^{d_H}$, $W_2\in\mathbb{R}^{d_Y\times k}$, $b_2\in\mathbb{R}^{d_Y}$, such that
\begin{align*}
    &\|\Psi(\tb{u}_0) - \mathcal{R}^{d_Y}\circ (W_2\cdot\sigma(W_1\cdot\mathcal{E}^D(\tb{u}_0)^T)+ b_2)\|_Y <
   % &\leq \|\Psi(u_0)-\Psi(u^m_0)\|_2 + \|\Psi(u^m_0) - (W_2\cdot\sigma(W_1\cdot((u_0(x_0),u_0(x_1),\dots,u_0(x_m))^T)+ b_2)\|_2\\
    %&< C_1e^{CR^4\lambda_D}\kappa(D,m)+\epsilon-C_1e^{CR^4\lambda_D}\kappa(D,m) = 
    \epsilon.
\end{align*}
\end{theorem}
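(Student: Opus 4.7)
The plan is to decompose the overall error into three pieces: a shallow-network universal-approximation error on a compact finite-dimensional domain, an output-space projection error controlled by $d_Y$, and a sensor/quadrature error that contributes the $e^{CR^4\lambda_D}\kappa(D,m)$ prefactor appearing in the hypothesis. Assumption \ref{ass:lowdim} is crucial: it makes the Galerkin scheme stabilize at order $D$, so that $\Psi(\tb{u}_0)=\Psi^D(\tb{u}_0)$ and $\mathcal{D}^D\circ\mathcal{E}^D(\tb{u}_0)=\tb{u}_0$, and reduces the problem to approximating the continuous finite-dimensional map $F:=\mathcal{P}^{d_Y}\circ\Psi^D\circ\mathcal{D}^D:[-R,R]^D\to\mathbb{R}^{d_Y}$, which by Lemma \ref{ICNS:lips} together with Lemma \ref{encoder lips} is Lipschitz with constant at most $e^{CR^4\lambda_D}$ on its compact domain $\mathcal{E}^D(B_R(H)\cap\text{span}\{\tb{w}_1,\dots,\tb{w}_D\})$.

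First I would choose $d_Y$ large enough so that, by completeness of $\{\tb{w}_k\}$ in $Y$, the output projection error $\|\Pi_{d_Y}\Psi^D(\tb{u}_0)-\Psi^D(\tb{u}_0)\|_Y$ is below $\epsilon/4$. Then I would invoke the classical Chen--Chen shallow-network universal approximation theorem \cite{chen_universal_1995} to obtain a width $k$ and weights $W_1\in\mathbb{R}^{k\times d_H}$, $W_2\in\mathbb{R}^{d_Y\times k}$ and bias $b_2$ such that the depth-2 ReLU network $\Gamma(\tb{a})=W_2\sigma(W_1\tb{a})+b_2$ approximates $F$ within $\epsilon/4$ in the sup-norm over the relevant compact box. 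Lemma \ref{encoder lips} then transfers this sup-norm bound into a $Y$-norm bound after composing with $\mathcal{R}^{d_Y}$.

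The delicate third piece is the sensor/encoding error. The trilinear interpolants $\tb{w}_i^m$ give a sensor-computable surrogate $\tilde{\mathcal{E}}^D$ for $\mathcal{E}^D$ with per-component error
\[
|\langle\tb{u}_0,\tb{w}_i\rangle-\langle\tb{u}_0,\tb{w}_i^m\rangle|\leq \|\tb{u}_0\|_{L^1(\T^3)}\|\tb{w}_i-\tb{w}_i^m\|_{L^\infty}\leq C\,R\,\kappa(D,m),
\]
so $\|\mathcal{E}^D(\tb{u}_0)-\tilde{\mathcal{E}}^D(\tb{u}_0)\|_2\leq C_1(D,R)\kappa(D,m)$. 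Propagating this through $F$ (Lipschitz amplification by $e^{CR^4\lambda_D}$) and the unit-Lipschitz decoder $\mathcal{R}^{d_Y}$ bounds the sensor contribution by $C_1 e^{CR^4\lambda_D}\kappa(D,m)<\epsilon/2$ under the standing hypothesis. A triangle inequality combining the three pieces yields the conclusion.

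The main obstacle is reconciling the \emph{exact} encoder $\mathcal{E}^D(\tb{u}_0)$ that appears as the network input in the statement with the sensor-based quadrature that produces the $\kappa(D,m)$ factor: the resolution is that the first-layer map $W_1$ is chosen to absorb the quadrature rule converting sensor readings into $\tilde{\mathcal{E}}^D$, while simultaneously feeding the universal-approximation network; this enlarges $k$ but keeps the architecture depth-2, and the $\kappa(D,m)$ factor enters precisely through the Lipschitz amplification by $e^{CR^4\lambda_D}$ of this first-layer encoding error.
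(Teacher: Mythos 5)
Your proposal is essentially correct and uses the same ingredients as the paper (low-dimensionality forcing $\Psi=\Psi^D$, the Lipschitz constant $e^{CR^4\lambda_D}$ from Lemma \ref{ICNS:lips}, the trilinear-interpolation error $\kappa(D,m)$, the Chen--Chen universal approximation theorem for a depth-2 ReLU network, and a $d_Y$-projection error), but you place the sensor error on the opposite side of the diagram. The paper perturbs the reconstruction of the initial data: it introduces $T_m$ mapping $\tb{w}_i\mapsto\tb{w}^m_i$, lets the network approximate the sensor-induced target $\phi_m=\mathcal{P}^{d_Y}\circ\Psi\circ T_m\circ\mathcal{D}^D$, and controls $\|\mathcal{P}^{d_Y}\circ\Psi\circ\mathcal{D}^D-\phi_m\|$ by $C_1e^{CR^4\lambda_D}\kappa(D,m)$ via the Lipschitzness of $\Psi$; the budget is then $\epsilon/2$ (sensor $+$ UAT, at the encoded level) plus $\epsilon/2$ (output projection). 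You instead perturb the encoder, bounding $\|\mathcal{E}^D(\tb{u}_0)-\tilde{\mathcal{E}}^D(\tb{u}_0)\|_2\le C_1\kappa(D,m)$ and amplifying through the Lipschitz constant of $F=\mathcal{P}^{d_Y}\circ\Psi^D\circ\mathcal{D}^D$, with budget $\epsilon/4+\epsilon/4+\epsilon/2$. Both chains close, and your version has the mild advantage of never applying $\Psi$ to the interpolated (not necessarily divergence-free, not in $\mathrm{span}\{\tb{w}_1,\dots,\tb{w}_D\}$) field $T_m\tb{u}_0$, a point the paper glosses over.

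One caveat: your ``resolution'' that $W_1$ absorbs the quadrature rule converting sensor readings into $\tilde{\mathcal{E}}^D$ does not type-check with the architecture in the statement, where $W_1\in\mathbb{R}^{k\times d_H}$ acts on the exact coefficient vector $\mathcal{E}^D(\tb{u}_0)\in\mathbb{R}^{d_H}$, not on $3(m+1)$ sensor values. In the paper the sensor count enters only through the target function $\phi_m$ that the network approximates (reflecting that training data are generated from sensor measurements), while the network input stays $\mathcal{E}^D(\tb{u}_0)$; in your setup the clean statement is that the sensor term measures the discrepancy between evaluating the (exactly approximating) network at $\mathcal{E}^D(\tb{u}_0)$ versus at the sensor-computable $\tilde{\mathcal{E}}^D(\tb{u}_0)$, and either reading still yields the claimed bound $<\epsilon$. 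This is a framing slip rather than a gap in the estimate.
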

 %The image of $\Omega_X$ under $\Psi$ is also bounded. Let $$v\in \Omega_Y:=\{v\in Y~|~v=\Psi(u)~for ~u\in \Omega_X\},$$ we have $\|v\|_Y\leq L_\Psi R_X$. 
%Denote $\Psi_\gamma$ the push-forward measure of $\gamma$ under $\Psi$, such that for any $V\subset Y$, $$\Psi_\gamma(V) = \gamma(\{u:\Psi(u)\in \Omega\}).$$

\section{Learning operator to the weak solutions}\label{sec:main}

\subsection{Galerkin approximation.}\label{sec:Galerkin}

From now on, $ s=d/2$.
The global existence of Leray-Hopf solutions was established based on Galerkin approximation. 
The eigenfunctions $\{\tb{w}_k\}_{k\in\mathbb{Z}^d}$ of the Stokes operator $-\Delta$ form an orthonormal basis of $H$. Precisely, 
for each $k\in\mathbb{Z}^d\backslash \{0\}$, we choose $\beta_k,~\beta_{-k}\in \mathbb{R}^d$ such that 
$\{\beta_k\cos(k\cdot\tb{x})\}\bigcup\{\beta_k\sin(k\cdot\tb{x})\}$ forms the orthonormal basis and the corresponding eigenvalue is given by $\lambda_k = |k|^2$.
We re-label the sequence $\{\tb{w}_k\}_{k\in\mathbb{N}}$ such that the corresponding $\{\lambda_k\}_{k\in\mathbb{N}}$ is non-decreasing. 
%In particular, when $d=3$, Van der Corput’s estimate indicates that for $\tb{w}_m$, the relation of $m$ and $\lambda_m$ satisfies
%\begin{align}\label{van-der-corput}
%    m = \frac{4\pi}{3} \lambda_m^\frac{3}{2} + O(\lambda_m^\frac{3}{4}).
%\end{align}

For any $\tb{h}\in H$, $\tb{h}$ can be written as $\tb{h} = \sum^\infty_{i=1} \langle \tb{h}, \tb{w}_i\rangle \tb{w}_i.$
Given $m\in\mathbb{N}$, we 
define a projection operator $P_m : H\to \text{span}\{\tb{w}_1,\dots,\tb{w}_m\}$ by 
\begin{align*}
    P_m(\tb{h}) = \sum_{i=1}^m (\tb{h},\tb{w}_i)\tb{w}_i.
\end{align*}

The $m$-th order Galerkin approximation of Navier-Stokes equations is given by:
\begin{align}\label{Galerkin_equation}
    &\partial_t \tb{u}^m + P_m[(\tb{u}^m\cdot \nabla)\tb{u}^m] - \nu \Delta \tb{u}^m = 0,\nonumber\\
    & \tb{u}^m_0=P_m(\tb{u}_0).
\end{align}

Suppose the solution of (\ref{Galerkin_equation}) takes the form
\begin{align}\label{ICNSdef:u_m}
    \tb{u}^m(\tb{x},t) = \sum_{k=1}^m d^m_k(t)\tb{w}_k(\tb{x}) \in~ \text{span}\{\tb{w}_1,\dots,\tb{w}_m\}
\end{align}
%and $u^N(x,t)$ satisfies for almost every $t\in [0,T)$, 
%\begin{align} \label{u_m}
%    \langle \partial_t u^N,w_k \rangle + \nu a(u^N,w_k) + b(u^N,u^N, w_k) = 0~~~~~~~~k =1,2,\dots,N,    
%\end{align}
%with initial condition 
%\begin{align} \label{u_m:ini}
%    u^N(x,0) = \sum^N_{k=1}d^N_k(0)w_k(x), ~~\text{and}~~d^N_k(0) = \langle u_0(x),w_k(x)\rangle.
%\end{align}
with $m$ unknown functions $d^m_k, k=1,2,\dots,m$. We multiply (\ref{Galerkin_equation}) with $\tb{w}_k$ and integrate over $\tb{x}$.
By orthogonality of $\{\tb{w}_k\}$ in $L^2(\mathbb{T}^d)$ and the properties of eigenfunctions, we obtain an ODE system in $d^m_k(t),~k=1,2,\dots,m$:
\begin{align}\label{d^m_k}
    \dfrac{d}{dt} d^m_k(t) + \nu\lambda_k d^m_k(t) + \sum^m_{j,l=1}d^n_j(t)d^m_l(t)\int_{\mathbb{T}^d} (\tb{w}_j(\tb{x})\cdot\nabla)\tb{w}_l(\tb{x})\cdot \tb{w}_k(\tb{x})\text{d}\tb{x} = 0.
\end{align}

By classical ODE theory, there exists $T>0$ such that the ODE system (\ref{d^m_k}) admits unique solutions $\{d^m_k(t)\}^m_{k=1}\in C^1((0,T)).$
Let $t\in (0,T]$, multiply \ref{Galerkin_equation} by $\tb{u}^m$ and integrate over $\tb{x}$ to get
\begin{align*}
    \int_{\mathbb{T}^d} \partial_t \tb{u}^m\cdot \tb{u}^m~\text{d}\tb{x} + \int_{\mathbb{T}^d} (\tb{u}^m\cdot \nabla) \tb{u}^m\cdot \tb{u}^m~\text{d}\tb{x} - \nu\int_{\mathbb{T}^d} \Delta \tb{u}^m \cdot \tb{u}^m ~\text{d}\tb{x} = 0.
\end{align*}

By the incompressibility of $\tb{u}^m$, we have 
\begin{align}\label{b(u,u,u)}
    \int_{\mathbb{T}^d} (\tb{u}^m\cdot \nabla) \tb{u}^m\cdot \tb{u}^m~\text{d}\tb{x} = 0.
\end{align}

Thus we can obtain the following $m$-dimensional energy equality
\begin{align*}
    \dfrac{1}{2}\dfrac{d}{dt}\|\tb{u}^m(t)\|^2_{L^2(\mathbb{T}^d)} + \nu \|\nabla \tb{u}^m(t)\|^2_{L^2(\mathbb{T}^d)} = 0.
\end{align*}

%By Lax-Milgram theorem, $\exists~ C,c>0$ independent of $N$, such that $c\|u^N\|^2_V\leq a(u^N,u^N)\leq C\|u^N\|^2_V$. Using Cauchy-Schwarz inequality, Young's inequality, we have
%\begin{align}\label{u^N_bdd}
%    \dfrac{1}{2}\dfrac{d}{dt}\|u^N(t)\|^2_{(L^2(\Omega))^d} + C\epsilon\|u^N(t)\|^2_V \leq \dfrac{C\epsilon}{2}\|u^N(t)\|^2_V.
%\end{align}
Integrating on $t\in [0,T],~T>0$, this energy equality implies that 
\begin{align}\label{energy:u^m}
\begin{split}
   & \sup_{t\in [0,T]} \dfrac{1}{2}\|\tb{u}^m(t)\|^2_{L^2(\mathbb{T}^d)} \leq \dfrac{1}{2}\|\tb{u}_0\|^2_{L^2(\mathbb{T}^d)},
   \\&\nu\int^T_0 \|\nabla \tb{u}^m(t)\|^2_{L^2(\mathbb{T}^d)}\text{d}t \leq \dfrac{1}{2}\|\tb{u}_0\|^2_{L^2(\mathbb{T}^d)}.
   \end{split}
\end{align}
Therefore
$\{\tb{u}^m\}$ is uniformly bounded in $L^2(0,T;V) \cap L^\infty(0,T;H)$ for any positive $T$. In particular, $\{d^m_k(t)\}^m_{k=1}$ is uniformly bounded in $t$, hence $T$ can be $\infty$. 

By classical theory of weak solutions, $\{\tb{u}^m\}^\infty_{m=1}$ has a subsequence, still denoted by $\{\tb{u}^m\}^\infty_{m=1}$, that strongly converges to $\tb{u}\in L^2(0,T;V)$, and $\tb{u}$ is a Leray-Hopf weak solution to the incompressible Navier-Stokes equations \ref{ICNS}-\ref{ini_ICNS}. Readers may refer to \cite{PLLions} for more details.

\subsection{Properties of Approximating Operators}\label{sec:lips}

In this Section, $m\in\mathbb{N}$ is a fixed order in the Galerkin approximation.
We have seen in Section \ref{sec:Galerkin} 
that the first $m$ principal eigenfunctions $\{\tb{w}_1,\tb{w}_2,\dots,\tb{w}_m\}$ together with the projection operator $P_m$ provide a way to reduce the dimension of the target operator between infinite dimensional Hilbert spaces to a mapping between finite dimensional vector space.  The goal of this subsection is to show that the solution operator is Lipschitz in term of initial data.

Let $\tb{u}_0 \in H$, denote $\tb{u}^m_0$ to be
\begin{align}\label{u^m_0}
    \tb{u}^m_0 := P_m(\tb{u}_0) := \sum^m_{i=1}\langle \tb{u}_0, \tb{w}_i\rangle \tb{w}_i.
\end{align}
By Assumption \ref{ass:energy}, 
 $ \|\tb{u}^m_0\|_H \leq \|\tb{u}_0\|_H\leq R$.

Denote by $\Psi^m:\text{span}\{\tb{w}_1,\dots,\tb{w}_m\}\to Y$ the operator mapping $\tb{u}^m_0$ to $\tb{u}^m$, where $\tb{u}^m$ is the unique solution of $m$-th order Galerkin equation (\ref{Galerkin_equation}). 
 By energy inequality (\ref{energy:u^m}) and Assumption \ref{ass:energy},
$$\|\Psi^m(\tb{u}^m_0)\|_Y=\|\tb{u}^m\|_Y\leq C(\nu)\|\tb{u}_0\|_H\leq C(\nu)R,$$
where $C(\nu)$ is a positive constant depending on $\nu$, but independent of $m$.  The next lemma shows that $\Psi^m$ is a Lipschitz operator with respect to the initial function.
 
\begin{lemma}\label{ICNS:lips}
Let $d=2,3$, and $T>0$.
For any $m\in\mathbb{N}$, $$\Psi^m:\text{span}\{\tb{w}_1,\dots,\tb{w}_m\}\to Y$$ is a Lipschitz operator, i.e. for any $\tb{u}_0, \tb{v}_0\in H$ satisfying (\ref{icns:initial_energy}) and $\tb{u}^m_0,\tb{v}^m_0$ given in (\ref{u^m_0}), 
\begin{align}\label{ineq:lips}
    \|\Psi^m(\tb{u}^m_0) - \Psi^m(\tb{v}^m_0)\|_Y \leq L_{\Psi^m}\|\tb{u}^m_0-\tb{v}^m_0\|_H
\end{align}
for some $L_{\Psi^m}>0$.
\end{lemma}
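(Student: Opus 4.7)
The plan is to perform an energy estimate on the difference $\tb{h}^m := \tb{u}^m - \tb{v}^m$ of the two Galerkin solutions, then apply Gr\"onwall's inequality with the exponent controlled differently in $d=2$ and $d=3$. First, I would subtract the Galerkin equations for $\tb{u}^m$ and $\tb{v}^m$, test the result against $\tb{h}^m$, and integrate over $\mathbb{T}^d$. Since $P_m$ is self-adjoint and $\tb{h}^m$ already lies in $\text{span}\{\tb{w}_1,\dots,\tb{w}_m\}$, the projector can be removed. The key manipulation of the nonlinear terms is to write $\tb{v}^m = \tb{u}^m - \tb{h}^m$ and expand
\begin{align*}
(\tb{u}^m\cdot\nabla)\tb{u}^m - (\tb{v}^m\cdot\nabla)\tb{v}^m = (\tb{u}^m\cdot\nabla)\tb{h}^m + (\tb{h}^m\cdot\nabla)\tb{u}^m - (\tb{h}^m\cdot\nabla)\tb{h}^m.
\end{align*}
By the divergence-free identity (\ref{b(u,u,u)}), the first and third contributions vanish when paired with $\tb{h}^m$, leaving only $\langle (\tb{h}^m\cdot\nabla)\tb{u}^m,\tb{h}^m\rangle$ as the source term.

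Next, I would estimate this remaining term by $\|\nabla\tb{u}^m\|_{L^2}\,\|\tb{h}^m\|_{L^4}^2$ and invoke the Gagliardo--Nirenberg inequality $\|\tb{h}^m\|_{L^4}^2 \leq C\,\|\nabla\tb{h}^m\|^{d/2}\,\|\tb{h}^m\|^{2-d/2}$, valid uniformly for $d=2,3$. Young's inequality then lets me absorb a full $\nu\|\nabla\tb{h}^m\|_{L^2}^2$ into the left-hand side, yielding a pointwise-in-time differential inequality of the form
\begin{align*}
\frac{d}{dt}\|\tb{h}^m(t)\|_{L^2}^2 \leq C(\nu,d)\,\|\nabla\tb{u}^m(t)\|_{L^2}^{4/(4-d)}\,\|\tb{h}^m(t)\|_{L^2}^2.
\end{align*}
Gr\"onwall's inequality then gives $\|\tb{h}^m(s)\|_{L^2}^2 \leq \|\tb{h}^m_0\|_{L^2}^2\,\exp\bigl(C\int_0^s \|\nabla\tb{u}^m\|^{4/(4-d)}\,dt\bigr)$ for every $s\in[0,T]$, and integrating this in $s$ yields the $Y$-norm bound up to a factor depending on $T$.

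The dimensional split, which is where the real difficulty lies, comes from controlling the Gr\"onwall exponent. In $d=2$ the exponent is $\int_0^T\|\nabla\tb{u}^m\|_{L^2}^2\,dt$, which the energy inequality (\ref{energy:u^m}) bounds by $R^2/(2\nu)$ \emph{independently of} $m$; so $L_{\Psi^m}\leq \exp(CR^2)$ uniformly in $m$. In $d=3$ the exponent is $\int_0^T\|\nabla\tb{u}^m\|_{L^2}^4\,dt$, which I would split as $\sup_t\|\nabla\tb{u}^m(t)\|_{L^2}^2 \cdot \int_0^T\|\nabla\tb{u}^m\|_{L^2}^2\,dt$. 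The second factor is again controlled by (\ref{energy:u^m}), but the first factor has no $m$-uniform bound; instead, using that $\tb{u}^m = \sum_{k=1}^m d_k^m(t)\tb{w}_k$ and that $\{\tb{w}_k\}$ is an orthonormal basis of eigenfunctions with non-decreasing eigenvalues $\lambda_k$, we get $\|\nabla\tb{u}^m(t)\|_{L^2}^2 = \sum_{k=1}^m \lambda_k (d_k^m(t))^2 \leq \lambda_m\|\tb{u}^m(t)\|_{L^2}^2 \leq \lambda_m R^2$. This yields $L_{\Psi^m}\leq \exp(CR^4\lambda_m)$, which blows up with $m$ and motivates Assumption \ref{ass:lowdim}. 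The main obstacle is precisely this dimension-dependent loss in 3D: the finite-dimensional Galerkin trick is the only tool available for bounding $\|\nabla\tb{u}^m\|_{L^\infty_t L^2_x}$, and any $m$-independent bound would amount to a regularity breakthrough for 3D Navier--Stokes.
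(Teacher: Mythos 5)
Your proposal matches the paper's own proof essentially step for step: the same energy estimate on $\tb{h}^m=\tb{u}^m-\tb{v}^m$ with the expansion $(\tb{u}^m\cdot\nabla)\tb{u}^m-(\tb{v}^m\cdot\nabla)\tb{v}^m=(\tb{u}^m\cdot\nabla)\tb{h}^m+(\tb{h}^m\cdot\nabla)\tb{u}^m-(\tb{h}^m\cdot\nabla)\tb{h}^m$, the same Gagliardo--Nirenberg/Young absorption leading to the Gr\"onwall exponent $\int_0^s\|\nabla\tb{u}^m\|^{4/(4-d)}dt$, and the same dimensional split (energy inequality alone in $d=2$; the splitting $\sup_t\|\nabla\tb{u}^m\|^2\cdot\int\|\nabla\tb{u}^m\|^2dt$ with the eigenvalue bound $\|\nabla\tb{u}^m\|^2\leq\lambda_m\|\tb{u}^m\|^2\leq\lambda_m R^2$ in $d=3$), giving the same constants $\exp(CR^2)$ and $\exp(CR^4\lambda_m)$. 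The argument is correct and no different in substance from the paper's.
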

\begin{proof}
Without any ambiguities, we omit the subscript of norms.
Let $\tb{u}_0,\tb{v}_0\in H$ with finite initial energy (\ref{icns:initial_energy}), we denote by $$\tb{u}^m:=\Psi^m(\tb{u}^m_0),\;\;\tb{v}^m:=\Psi^m(\tb{v}^m_0)$$the corresponding solutions respectively to the $m$-th order Galerkin approximation  (\ref{Galerkin_equation}) in the form \ref{ICNSdef:u_m}. 

Accordingly, we denote by $$\tb{h}^m_0=\tb{u}^m_0-\tb{v}^m_0,\;\;\text{ and }\;\;\tb{h}^m=\tb{u}^m-\tb{v}^m.$$
For any $s\in (0,T]$, multiply (\ref{Galerkin_equation}) at time $s$ with $\tb{h}^m(s)$ and integrate in $x$ to get 
\begin{align}\label{eq:uh}
    -\langle \tb{u}^m,\partial_t \tb{h}^m\rangle + \nu \langle \nabla \tb{u}^m,\nabla \tb{h}^m\rangle + \langle (\tb{u}^m\cdot \nabla)\tb{u}^m,\tb{h}^m\rangle =
    -\dfrac{d}{dt}\langle \tb{u}^m,\tb{h}^m\rangle,
\end{align}
Similarly, 
\begin{align}\label{eq:vh}
    -\langle \tb{v}^m,\partial_t \tb{h}^m\rangle + \nu \langle \nabla \tb{v}^m,\nabla \tb{h}^m\rangle + \langle (\tb{v}^m\cdot \nabla)\tb{v}^m,\tb{h}^m\rangle =
    -\dfrac{d}{dt}\langle \tb{v}^m,\tb{h}^m\rangle.
\end{align}
The difference of \ref{eq:uh} and \ref{eq:vh} gives
\begin{align}\label{eq:h}
    \dfrac{1}{2}\dfrac{d}{dt}\|\tb{h}^m\|^2 + \nu\|\nabla \tb{h}^m\|^2 &\leq \big| \langle (\tb{u}^m\cdot\nabla)\tb{u}^m-(\tb{v}^m\cdot\nabla)\tb{v}^m,\tb{h}^m\rangle\big| \nonumber\\
    &= \big | \langle (\tb{u}^m\cdot\nabla)\tb{u}^m,\tb{h}^m\rangle - \langle ((\tb{u}^m-\tb{h}^m)\cdot\nabla)(\tb{u}^m-\tb{h}^m),\tb{h}^m\rangle\big| \nonumber\\
    &= \big| \langle (\tb{u}^m\cdot\nabla)\tb{h}^m,\tb{h}^m\rangle + \langle (\tb{h}^m\cdot\nabla)\tb{u}^m,\tb{h}^m\rangle - \langle (\tb{h}^m\cdot \nabla)\tb{h}^m,\tb{h}^m\rangle\big |.
\end{align}

By (\ref{b(u,u,u)}) and the incompressibility of $\tb{h}^m$, the first and the last terms on the right side of \ref{eq:h} vanish. Thus we have the following inequality
\begin{align}\label{eq:h_1}
    \dfrac{1}{2}\dfrac{d}{dt}\|\tb{h}^m\|^2 + \nu\|\nabla \tb{h}^m\|^2 \leq \big|\langle (\tb{h}^m\cdot\nabla)\tb{u}^m,\tb{h}^m\rangle\big| 
    \leq \int_{\T^d} |\tb{h}^m|^2 |\nabla \tb{u}^m| d\tb{x}.
\end{align}

The Gagliardo-Nirenberg interpolation inequality with $\theta=\frac{d}{4}$ implies that
\begin{align*}
    \|\tb{h}^m\|^2_{L^4}\leq C\|\nabla \tb{h}^m\|^{d/2}\|\tb{h}^m\|^{2-d/2}
\end{align*}
holds for $d=2,3$. Then \ref{eq:h_1} becomes 
\begin{align*}
     \dfrac{1}{2}\dfrac{d}{dt}\|\tb{h}^m\|^2 + \nu \|\nabla \tb{h}^m\|^2 &\leq C\|\nabla \tb{u}^m\|\cdot \|\tb{h}^m\|^{2-d/2} \cdot \|\nabla \tb{h}^m\|^{d/2}\\
     &\leq C\|\tb{h}^m\|^2\cdot \|\nabla \tb{u}^m\|^{4/4-d} + \nu\|\nabla \tb{h}^m\|^2.
\end{align*}
Here we used Young's inequality and the constant $C$ depends on $d$ and $\nu$. 

%Since the space $\text{span}\{w_1,...,w_m\}$ is a finite dimensional space, we know that $L^2$ norm and $L^\infty$ norm are equivalent. There exists $C,c>0$ such that $c\|\nabla u\|\leq \|\nabla u\|_\infty\leq C\|\nabla u\|$. Thus \ref{eq:h_1} becomes
%\begin{align*}
%    \dfrac{1}{2}\dfrac{d}{dt}\|h\|^2 + \|\nabla h\|^2 \leq C\|\nabla u\|_\infty\|h\|^2
%\end{align*}
%for some generic constant $C$. Thus
%\[
%\dfrac{d}{dt}\|h\|^2 \leq C\|\nabla u\|_\infty\|h\|^2.
%\]
By Gronwall's inequality, we have
\begin{equation}\label{ICNS:Gronwall}
    \|\tb{h}^m\|^2 \leq \|\tb{h}^m_0\|^2 \exp\Big(C\int_0^s \|\nabla \tb{u}^m\|^{4/4-d} dt\Big). 
    %\leq \begin{cases}
    %\|h_0\|^2 \exp(C\|u_0\|^2_H)\leq \|h_0\|^2\exp(CR^2) &\text{for}~ d=2\\
    %\|h_0\|^2 \exp(C\|u_0\|^{\frac{4}{3}}_H\cdot s^{\frac{1}{3}}) \leq \|h_0\|^2\exp(CR^{\frac{4}{3}}T^{\frac{1}{3}})) &\text{for}~d=3.
    %\end{cases}
\end{equation}

When $d=2$,  one may use (\ref{energy:u^m}) and (\ref{icns:initial_energy}) such that (\ref{ICNS:Gronwall}) can be bounded by $\|\tb{h}^m_0\|^2\exp(CR^2)$ with $C$ depending on $\nu$ and $d$ but independent of $m$. In this case, 
by the definition of $\tb{h}^m$ and $\tb{h}^m_0$, $\Psi^m$ is a Lipschitz operator with Lipschitz constant $L_{\Psi^m}=\exp(CR^2)$ independent of $m$.

When $d=3$, (\ref{ICNS:Gronwall}) can be written as 
\begin{align}\label{I+II}
    \|\tb{h}^m\|^2 &\leq \|\tb{h}^m_0\|^2 \exp\Big(C\int_0^s \|\nabla \tb{u}^m\|^4 dt\Big)\nonumber\\
    &\leq \|\tb{h}^m_0\|^2 \exp\Big(C\underbrace{\sup_t \|\nabla \tb{u}^m(t)\|^2}_{\text{I}}\cdot \underbrace{\int^s_0 \|\nabla \tb{u}^m(t)\|^2 \text{d}t}_{\text{II}}\Big).
\end{align}

It is clear that II is uniformly bounded by $\|\tb{u}^m_0\|^2\leq R^2$ according to (\ref{energy:u^m}).  As for I, we use energy inequality (\ref{energy:u^m}) and the fact that $\{\tb{w}_1,\dots,\tb{w}_m\}$ is an orthonormal basis and the corresponding eigenvalue $\lambda_k$ is non-decreasing to get
\begin{align*}
    \sup_t \|\nabla \tb{u}^m(t)\|^2 &= \sup_t \|\sum^m_{k=1}d^m_k(t)\nabla \tb{w}_k\|^2\\
    &=\sup_t \sum^m_{k=1}\lambda_k (d^m_k(t))^2\\
    &\leq \lambda_m \sup_t\|\tb{u}^m(t)\|^2
    \leq \lambda_m \|\tb{u}^m_0\|^2 \leq \lambda_m R^2.
\end{align*}

Now (\ref{I+II}) becomes $$\|\tb{h}^m\|^2\leq \|\tb{h}^m_0\|^2\exp(CR^4\lambda_m)$$ with $C$ depending on $\nu$, $d$ and $\lambda_m$ depending on $m$. Therefore in 3D case, $\Psi^m$ is a Lipschitz operator with Lipschitz constant $$L_{\Psi^m} = \exp(CR^4\lambda_m)$$ depending on $m$.
\end{proof}
\begin{remark}\label{2D_psi_lips}
In 2-dimensional case, the Lipschitz constant $L_{\Psi^m}$ is independent of $m$, this implies that by taking $m\to\infty$, $\Psi$ is also Lipschitz with the same Lipschitz constant as each $\Psi^m$ . The Lipschitz condition on $\Psi$ provides an alternative approach to prove the uniqueness of 2D Leray-Hopf weak solution.
\end{remark}

\begin{remark}\label{3D_psi_lips}
    In 3-dimensional case, the Lipschitz constant $L_{\Psi^m}=\exp(CR^4\lambda_m)$ depends on $m$ as we arrange the basis functions $\{\tb{w}_k\}^\infty_{k=1}$ in an order that $\lambda_k$ is non-decreasing. The Galerkin approximating sequence $\{\tb{u}^m\}$ converges to $\tb{u}$ in $L^2(0,T;V)$ as $m\to\infty$, the corresponding $L_{\Psi^m}\to\infty$ exponentially fast. 
\end{remark}

\subsection{The Empirical Projection Error}\label{sec:ep}

The empirical projection errors on both $H$ and $Y$ spaces w.r.t. a finite collection of training data $S(N)=\{\tb{x}_i, \tb{y}_i\}^N_{i=1}$ can be bounded using a 
Monte Carlo type estimate. Given encoding dimension $d_H, d_Y>0$, the projection errors are denoted by:
\begin{align}\label{R(V)}
    \text{Pr}:=\mathbb{E}_{\tb{x}\sim\mathbb{P}}\|\tb{x} - \Pi_{d_H}\tb{x}\|^2_H,~~~~~~
    \text{Pr}^{\Psi_\sharp\mathbb{P}} := \mathbb{E}_{\tb{y}\sim\Psi_\sharp\mathbb{P}}\|\tb{y} - \Pi_{d_Y}\tb{y}\|^2_Y.
\end{align} 
And the corresponding empirical projection errors are given by:
\begin{align}\label{RN(V)}
    \text{Pr}(N) := \frac{1}{N}\sum^N_{j=1}\|\tb{x}_j - \Pi_{d_H}\tb{x}_j\|^2_H,~~~~~~
    \text{Pr}^{\Psi_\sharp\mathbb{P}}(N):=\frac{1}{N}\sum^N_{j=1}\|\tb{y}_j - \Pi_{d_Y}\tb{y}_j\|^2_Y.
\end{align}
We state the following lemma which is a partial result of Theorem 3.4 \cite{bhattacharya}, and the proof can be found therein. 
\begin{lemma}\label{lemma:empirical proj} There exist constants $Q_H, Q_Y\geq 0$ such that
\begin{align}\label{ineq:lemma ep}
    \mathbb{E}_{\tb{x}_j\sim\mathbb{P}} [\text{Pr}(N)] \leq \sqrt{\frac{Q_Hd_H}{N}} + \text{Pr},~ \text{and}~~
    \mathbb{E}_{\tb{y}_j\sim\Psi_\sharp\mathbb{P}} [\text{Pr}^{\Psi_\sharp\mathbb{P}}(N)] \leq \sqrt{\frac{Q_Yd_Y}{N}} + \text{Pr}^{\Psi_\sharp\mathbb{P}}.
\end{align}  
\end{lemma}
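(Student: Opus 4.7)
The plan is to follow the empirical-process strategy underlying Theorem~3.4 of \cite{bhattacharya}: treat the squared projection error as a bounded random variable indexed by the class of rank-$d_H$ orthogonal projections on $H$, and control the discrepancy between the empirical and population versions by a Rademacher/covering argument. Because $\Pi_{d_H}$ is an orthogonal projection, the Pythagorean identity gives $\|\tb{x}-\Pi_{d_H}\tb{x}\|_H^2=\|\tb{x}\|_H^2-\|\Pi_{d_H}\tb{x}\|_H^2$, and Assumption~\ref{ass:energy} bounds $\|\tb{x}\|_H\le R$ almost surely under $\mathbb{P}$, so each summand defining $\mathrm{Pr}(N)$ lies in $[0,R^2]$. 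This boundedness is what makes the concentration machinery applicable.

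First I would set up the function class $\mathcal{G}_{d_H}=\{g_\Pi(\tb{x}):=\|\tb{x}-\Pi\tb{x}\|_H^2:\Pi\text{ is a rank-}d_H\text{ orthogonal projection on }H\}$, uniformly bounded by $R^2$ on the support of $\mathbb{P}$. For the fixed Stokes-basis projection $\Pi_{d_H}$ used in the lemma, the empirical mean $\mathrm{Pr}(N)$ is already an unbiased estimator of $\mathrm{Pr}$; nevertheless, one obtains the stated inequality by passing through the uniform bound $\mathbb{E}\sup_{g\in\mathcal{G}_{d_H}}\bigl|\tfrac1N\sum_{j=1}^N g(\tb{x}_j)-\mathbb{E}g(\tb{x})\bigr|$, which is what allows the statement to accommodate data-dependent choices of the encoder as well and matches the generality in \cite{bhattacharya}.

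Next I would symmetrize this supremum with an i.i.d. Rademacher sequence $\{\sigma_j\}$ in the standard way, reducing the problem to bounding the Rademacher complexity $\mathcal{R}_N(\mathcal{G}_{d_H})=\mathbb{E}\sup_{\Pi}\tfrac1N\sum_{j=1}^N\sigma_j\|\tb{x}_j-\Pi\tb{x}_j\|_H^2$. Writing $\|\tb{x}-\Pi\tb{x}\|_H^2=\|\tb{x}\|_H^2-\mathrm{tr}(\Pi\,\tb{x}\otimes\tb{x})$, the projection-dependent part is linear in $\Pi$, so $\mathcal{R}_N(\mathcal{G}_{d_H})$ reduces to $\mathbb{E}\sup_{\Pi}\tfrac1N\bigl|\sum_{j=1}\sigma_j\mathrm{tr}(\Pi\,\tb{x}_j\otimes\tb{x}_j)\bigr|$. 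Since $\Pi$ has trace norm $d_H$ and operator norm $1$, the duality between trace and operator norms together with the Cauchy--Schwarz bound on the empirical covariance yields $\mathcal{R}_N(\mathcal{G}_{d_H})\le C R^2\sqrt{d_H/N}$, which is exactly the desired rate. Absorbing constants into $Q_H$ and using the standard symmetrization inequality gives
\begin{equation*}
\mathbb{E}_{\tb{x}_j\sim\mathbb{P}}\mathrm{Pr}(N)\le \mathrm{Pr}+\mathbb{E}\sup_{g\in\mathcal{G}_{d_H}}\Bigl|\tfrac1N\textstyle\sum_j g(\tb{x}_j)-\mathbb{E}g\Bigr|\le \mathrm{Pr}+\sqrt{Q_H d_H/N}.
\end{equation*}

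The identical argument is then applied on the output side to $\Psi_\sharp\mathbb{P}$: since $\|\Psi(\tb{x})\|_Y\le C(\nu)R$ by the energy estimate (\ref{energy:u^m}), the push-forward measure is also supported in a ball, and repeating the steps with $H$ replaced by $Y$ and $d_H$ by $d_Y$ delivers the second inequality with constant $Q_Y$. The main technical obstacle is the duality/covering step in the Rademacher complexity bound, where one must show that although $H$ is infinite dimensional, the effective complexity scales only with the rank $d_H$ rather than with the ambient dimension; this is where the boundedness $\|\tb{x}\|_H\le R$ combined with the trace-class structure of $\Pi$ is essential, and it is the reason the prefactor is dimension-free apart from its linear dependence on $d_H$.
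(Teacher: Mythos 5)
Your proposal is correct, but it takes a genuinely different (and much heavier) route than the paper, which offers no proof at all: it simply cites Theorem~3.4 of \cite{bhattacharya} and imports the bound. What you have reconstructed is essentially the argument behind that cited theorem, which is designed for \emph{data-dependent} encoders (PCA computed from the samples), where one genuinely needs uniform control over the class of rank-$d_H$ projections via symmetrization and Rademacher complexity. In the present paper the encoder is the fixed Stokes-eigenfunction projection, independent of the training data, so --- as you yourself observe in passing --- $\mathrm{Pr}(N)$ is an unbiased estimator of $\mathrm{Pr}$ and the lemma holds with equality in expectation, i.e.\ one may take $Q_H=Q_Y=0$; a one-line computation $\mathbb{E}\bigl[\tfrac1N\sum_j\|\tb{x}_j-\Pi_{d_H}\tb{x}_j\|_H^2\bigr]=\mathbb{E}_{\tb{x}\sim\mathbb{P}}\|\tb{x}-\Pi_{d_H}\tb{x}\|_H^2$ already finishes the proof. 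Your longer route buys generality (it would cover a PCA encoder and matches the setting of \cite{bhattacharya}), at the cost of one imprecise step: the trace-norm/operator-norm duality you invoke gives $|\mathrm{tr}(\Pi A)|\le d_H\|A\|_{op}$ and hence a rate $d_H/\sqrt{N}$, not $\sqrt{d_H/N}$; to get the stated rate you should instead use Cauchy--Schwarz in the Hilbert--Schmidt inner product with $\|\Pi\|_{HS}=\sqrt{d_H}$ together with $\mathbb{E}\|\tfrac1N\sum_j\sigma_j\,\tb{x}_j\otimes\tb{x}_j\|_{HS}\le R^2/\sqrt{N}$. With that correction the uniform argument is sound, but for the lemma as actually used in this paper it is unnecessary machinery.
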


\subsection{ The Networks Approximation Error}
In this subsection, we prove an auxiliary lemma on the networks approximation error, together with projection errors in both $H,Y$ spaces we can prove our main results Theorem \ref{thm:ICNS_2D} and Theorem \ref{thm:ICNS_3D_lowdim}. This lemma is inspired by \cite{bhattacharya} Theorem 3.5, the difference comes from the choice of encoder and decoder. In \cite{bhattacharya}, the authors use PCA as an empirical dimension reduction method; while in our work, we use deterministic basis which is independent of training data.

Given $d_H,d_Y>0$, we define $\psi:\mathbb{R}^{d_H}\to \mathbb{R}^{d_Y}$ as
    \begin{align}\label{def:psi}
        \psi := \mathcal{P}^{d_Y}\circ \Psi\circ \mathcal{D}^{d_H},
    \end{align}
We aim to approximate $\psi$ by some ReLU neural networks $\Gamma:\mathbb{R}^{d_H}\to \mathbb{R}^{d_Y}$. We also define two operators
 $\Psi_\Delta, \Psi_{NN}:H\to Y$ as
\begin{align*}
    \Psi_\Delta:=\mathcal{R}^{d_Y}\circ \psi \circ \mathcal{E}^{d_H}, \\
    \Psi_{NN}:= \mathcal{R}^{d_Y}\circ \Gamma \circ \mathcal{E}^{d_H}.
\end{align*}

\begin{lemma}\label{lemma:auxi}
    Let $H, Y$ be two separable Hilbert spaces defined in Section \ref{sec:notation}.
    Let $\mathbb{P}$ be a probability measure on $B_R(H):=\{x\in H;~\|x\|_H\leq R\}$ with compact support.
    %and a finite collection $\{\tb{x}_i\}^N_{i=1}$ $N$ i.i.d. draws from $\mathbb{P}$ satisfying $\|\tb{x}_i\|_H\leq R$, for some $R>0$. 
    Let $\Psi:H\to Y$ be a Lipschitz operator with Lipschitz constant $L_\Psi$. 
    Fix $d_H,d_Y >0$, the encoders and decoders on $H,Y$: $\mathcal{E}^{d_H},\mathcal{D}^{d_H},\mathcal{P}^{d_Y}, \mathcal{R}^{d_Y}$ are defined in (\ref{encoder}),(\ref{decoder}) and (\ref{Y:encoder}). Let $\delta>0$, there exist a constant $c = c(d_H,d_Y)>0$ and a ReLU neural network $$\Gamma_{NN}\in\mathcal{F}_{NN}(d_H, d_Y, L, r, \frac{R}{\sqrt{d_H}})$$ such that 
    \begin{align*}
        \mathbb{E}_{\tb{x}_i\sim\mathbb{P}}\mathbb{E}_{\tb{x}\sim\mathbb{P}}\|\mathcal{R}^{d_Y}\circ \Gamma_{NN} \circ \mathcal{E}^{d_H}(\tb{x})-\Psi(\tb{x})\|^2_Y < 2\delta^2 + 4L_\Psi\bigg( \sqrt{\frac{Q_Hd_H}{N}} + \text{Pr}\bigg) + 4\bigg( \sqrt{\frac{Q_Yd_Y}{N}} + \text{Pr}^{\Psi_\sharp\mathbb{P}}\bigg)
    \end{align*}
    for some $Q_H, Q_Y\geq 0$, where
    \begin{align*}
        L\leq c \bigg[ \log{\bigg(\dfrac{R\sqrt{d_Y}}{\delta\sqrt{d_H}}\bigg) } + 1 \bigg]~,~~\text{and}~~~r\leq c\bigg(\dfrac{\delta\sqrt{d_H}}{2R}\bigg)^{-d_H} \bigg[ \log{\bigg(\dfrac{R\sqrt{d_Y}}{\delta\sqrt{d_H}}\bigg) } + 1 \bigg]~.
    \end{align*}

\end{lemma}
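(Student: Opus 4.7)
The plan is to split the error $\|\mathcal{R}^{d_Y}\circ\Gamma_{NN}\circ\mathcal{E}^{d_H}(\tb{x}) - \Psi(\tb{x})\|_Y$ into three well-separated pieces: a purely network approximation error for $\psi$, a decoding-projection error on the input side $H$, and a decoding-projection error on the output side $Y$. Introducing the auxiliary operator $\Psi_\Delta = \mathcal{R}^{d_Y}\circ\psi\circ\mathcal{E}^{d_H} = \Pi_{d_Y}\circ\Psi\circ\Pi_{d_H}$, and repeatedly using the triangle inequality together with the elementary bound $(a+b)^2\leq 2a^2+2b^2$ and the $1$-Lipschitz property of $\mathcal{R}^{d_Y}$ from Lemma \ref{encoder lips}, I can bound $\|\mathcal{R}^{d_Y}\circ\Gamma_{NN}\circ\mathcal{E}^{d_H}(\tb{x})-\Psi(\tb{x})\|_Y^2$ by a multiple of $\|\Gamma_{NN}-\psi\|_\infty^2$ on the relevant compact domain plus a multiple of $\|\Psi(\Pi_{d_H}\tb{x})-\Psi(\tb{x})\|_Y^2$ plus a multiple of $\|\Pi_{d_Y}\Psi(\tb{x})-\Psi(\tb{x})\|_Y^2$. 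The first piece is controlled uniformly by the sup-norm error of the network; the second is at most $L_\Psi^2\|\Pi_{d_H}\tb{x}-\tb{x}\|_H^2$ by the Lipschitz hypothesis on $\Psi$; and the third is precisely the pointwise $Y$-projection error.

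To handle the network approximation piece, I will observe that the composed map $\psi=\mathcal{P}^{d_Y}\circ\Psi\circ\mathcal{D}^{d_H}:\mathbb{R}^{d_H}\to\mathbb{R}^{d_Y}$ inherits Lipschitz continuity with constant $L_\Psi$ from Lemma \ref{encoder lips} and the hypothesis on $\Psi$. Since $\mathbb{P}$ is supported in $B_R(H)$, the image $\mathcal{E}^{d_H}(\mathrm{supp}\,\mathbb{P})$ is contained in a Euclidean ball of radius $R$ and hence in a box of the size prescribed by the fifth argument of $\mathcal{F}_{NN}$. Applying Yarotsky's ReLU approximation theorem (invoked in the discussion preceding Lemma \ref{ICNS:lips}) to each of the $d_Y$ coordinate functions of $\psi$ produces a network $\Gamma_{NN}\in\mathcal{F}_{NN}(d_H,d_Y,L,r,R/\sqrt{d_H})$ satisfying $\|\Gamma_{NN}-\psi\|_\infty\leq\delta$, where the depth $L$ grows logarithmically in $R\sqrt{d_Y}/(\delta\sqrt{d_H})$ and the parameter count $r$ scales like $(\delta\sqrt{d_H}/(2R))^{-d_H}$ times that same logarithmic factor. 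This furnishes precisely the stated complexity bounds, with $c=c(d_H,d_Y)$ absorbing the dimension-dependent prefactors.

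The final step is to take $\mathbb{E}_{\tb{x}_i\sim\mathbb{P}}\mathbb{E}_{\tb{x}\sim\mathbb{P}}$ of the pointwise decomposition. The network term yields $2\delta^2$ directly. The remaining two contributions are the expected input- and output-side projection errors $\mathbb{E}\|\Pi_{d_H}\tb{x}-\tb{x}\|_H^2$ and $\mathbb{E}\|\Pi_{d_Y}\Psi(\tb{x})-\Psi(\tb{x})\|_Y^2$; expressing these through the empirical counterparts $\text{Pr}(N)$, $\text{Pr}^{\Psi_\sharp\mathbb{P}}(N)$ from (\ref{RN(V)}) and invoking Lemma \ref{lemma:empirical proj} produces the bounds $\sqrt{Q_Hd_H/N}+\text{Pr}$ and $\sqrt{Q_Yd_Y/N}+\text{Pr}^{\Psi_\sharp\mathbb{P}}$, giving the announced inequality. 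The main obstacle will be faithfully tracking constants through Yarotsky's theorem, whose standard statement is given for $\mathcal{W}^{n,\infty}([0,1]^d)$ functions on the unit box: translating it into per-coordinate Lipschitz bounds on a box of diameter $\sim R$ while keeping the prefactor $c(d_H,d_Y)$ independent of $R$ and $\delta$ is the bookkeeping that most needs care, as is reconciling the Lipschitz constant $L_\Psi$ that naturally appears squared in the decomposition with the statement's linear $L_\Psi$.
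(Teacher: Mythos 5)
Your proposal follows essentially the same route as the paper's proof: the same decomposition through $\Psi_\Delta=\Pi_{d_Y}\circ\Psi\circ\Pi_{d_H}$ with $(a+b)^2\le 2a^2+2b^2$, the same coordinate-wise application of Yarotsky's theorem to the Lipschitz map $\psi=\mathcal{P}^{d_Y}\circ\Psi\circ\mathcal{D}^{d_H}$ after rescaling to the unit cube, and the same invocation of Lemma \ref{lemma:empirical proj} to convert the two projection terms into the stated $\sqrt{Q_Hd_H/N}+\text{Pr}$ and $\sqrt{Q_Yd_Y/N}+\text{Pr}^{\Psi_\sharp\mathbb{P}}$ bounds. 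The bookkeeping caveats you raise (the rescaling constants and the fact that the Lipschitz factor naturally enters squared while the statement records it linearly) are exactly the points the paper's own proof handles, or glosses over, in the same way.
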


\begin{proof}
    
We simply write $\mathbb{E}_{\tb{x}_i\sim\mathbb{P}}\mathbb{E}_{\tb{x}\sim\mathbb{P}}$ as $\mathbb{E}\mathbb{E}$, and decompose $\mathbb{E}\mathbb{E}\|\Psi_{NN}(\tb{x})-\Psi(\tb{x})\|^2_Y$ as:
\begin{align}\label{ineq:I+II}                  \mathbb{E}\mathbb{E}\|\Psi_{NN}(\tb{x})-\Psi(\tb{x})\|^2_Y
    \leq 2\underbrace{\mathbb{E}\|\Psi_{NN}(\tb{x})-\Psi_\Delta(\tb{x})\|^2_Y}_{\text{I}} + 2\underbrace{\mathbb{E}\mathbb{E}\|\Psi_\Delta(\tb{x})-\Psi(\tb{x})\|^2_Y}_{\text{II}}.
\end{align}
We first look at II, recall the definition of $\Pi_{d_H}, \Pi_{d_Y}$ in (\ref{Pi_H}) and (\ref{Pi_Y}):
\begin{align}\label{ineq:II}
    \mathbb{E}\mathbb{E}\|\Psi_\Delta(\tb{x})-\Psi(\tb{x})\|^2_Y
    & = \mathbb{E}\mathbb{E}\|\mathcal{R}^{d_Y}\circ \mathcal{P}^{d_Y} \circ \Psi \circ \mathcal{D}^{d_H}\circ \mathcal{E}^{d_H} (\tb{x}) - \Psi(\tb{x})\|^2_Y \nonumber\\
    &= \mathbb{E}\mathbb{E} \|\Pi_{d_Y}\Psi(\Pi_{d_H}(\tb{x})) - \Psi(\tb{x})\|^2_Y \nonumber\\
    &\leq 2\mathbb{E}\mathbb{E}\|\Pi_{d_Y}\Psi(\Pi_{d_H}(\tb{x})) - \Pi_{d_Y}\Psi(\tb{x})\|^2_Y + 2\mathbb{E}\mathbb{E}\|\Pi_{d_Y}\Psi(\tb{x}) - \Psi(\tb{x})\|^2_Y\nonumber\\
    &\leq 2L_\Psi \mathbb{E}\mathbb{E}\|\Pi_{d_H}(\tb{x})-\tb{x}\|^2_H + 2\mathbb{E}\mathbb{E}_{\tb{y}\sim\Psi_\sharp\mathbb{P}}\|\Pi_{d_Y}\tb{y}-\tb{y}\|^2_Y\nonumber\\
    &= 2L_\Psi \mathbb{E}_{\tb{x}_j\sim\mathbb{P}}[\text{Pr}(N)] + 2\mathbb{E}_{\tb{y}_i\sim\Psi_\sharp\mathbb{P}}[\text{Pr}^{\Psi_\sharp\mathbb{P}}(N)]\nonumber\\
    &\leq 2L_\Psi\bigg( \sqrt{\frac{Q_Hd_H}{N}} + \text{Pr}\bigg) + 2\bigg( \sqrt{\frac{Q_Yd_Y}{N}} + \text{Pr}^{\Psi_\sharp\mathbb{P}}\bigg),
\end{align}
where we use Lemma \ref{lemma:empirical proj} in the last inequality. We next control I by first approximating $\psi$ by some ReLU networks. From (\ref{def:psi}), $\psi$ is a Lipschitz function with Lipschitz constant $L_\Psi$. For any $\tb{a}\in\mathbb{R}^{d_H}$, 
\begin{align*}
    \psi(\tb{a}) = (\psi^{(1)}(\tb{a}), \cdots, \psi^{(d_Y)}(\tb{a})), 
\end{align*}
with $\psi^{(j)}\in C(\mathbb{R}^{d_H};\mathbb{R}), j=1,\dots,d_Y.$ In order to satisfy Assumption \ref{ass:energy}, we require that
 $\textbf{a} = (a_1,\dots,a_{d_H})\in \big[-\frac{R}{\sqrt{d_H}},\frac{R}{\sqrt{d_H}}\big]^{d_H}.$ Next we perform a change of variables, define $\Tilde{\psi}^{(j)}:[0,1]^{d_H}\to\mathbb{R}$ by
\begin{align*}
    \Tilde{\psi}^{(j)}(\tb{s}) = \dfrac{\sqrt{d_H}}{2R}\psi^{(j)}\bigg( \dfrac{2R}{\sqrt{d_H}}\tb{s}-\dfrac{R}{\sqrt{d_H}}\bigg),~~\tb{s}\in [0,1]^{d_H}.
\end{align*}
Equivalently, 
\begin{align*}
    \psi^{(j)}(\tb{s}) = \dfrac{2R}{\sqrt{d_H}}\Tilde{\psi}^{(j)}\bigg( \dfrac{\sqrt{d_H}\tb{s} + R}{2R} \bigg),
\end{align*}
and one can check that $\Tilde{\psi}^{(j)}, j=1,2,\dots, d_Y$ is Lipschitz and share the same Lipschitz constant as $\psi^{(j)}$. Given $\delta>0$,  we apply \cite{yarotsky_error_2017} Theorem 1 to $\Tilde{\psi}^{(j)}$ for each $j$ and obtain ReLU networks $$\Tilde{f}^{(1)},\dots,\Tilde{f}^{(d_Y)}:[0,1]^{d_H} \to\mathbb{R} $$ such that 
\begin{align*}
    |\Tilde{f}^{(j)}(\tb{s}) - \Tilde{\psi}^{(j)}(\tb{s})| < \dfrac{\delta\sqrt{d_H}}{2R\sqrt{d_Y}}, ~~\text{for all}~~\tb{s}\in [0,1]^{d_H}.
\end{align*}
Denote by $L^{(j)}$ the depth of $\Tilde{f}^{(j)}$, and $r^{(j)}$ the number of weights and bias of $\Tilde{f}^{(j)}$. $L^{(j)}, r^{(j)}$ satisfy:
\begin{align*}
    L^{(j)} \leq c^{(j)}\bigg [ \log{\bigg(\dfrac{R\sqrt{d_Y}}{\delta\sqrt{d_H}}\bigg) } + 1\bigg ]~,~~~~~~~
    r^{(j)} \leq c^{(j)}\bigg(\dfrac{\delta\sqrt{d_H}}{2R}\bigg)^{-d_H}\bigg [ \log{\bigg(\dfrac{R\sqrt{d_Y}}{\delta\sqrt{d_H}}\bigg) } + 1\bigg ]
\end{align*}
for some $c^{(j)}>0$ depending on $d_H$. Then we define $f^{(j)}:\mathbb{R}^{d_H}\to\mathbb{R}$ by
\begin{align}\label{def:f}
    f^{(j)}(\tb{a}):=\dfrac{2R}{\sqrt{d_H}}\Tilde{f}^{(j)}\bigg(\dfrac{\sqrt{d_H}\tb{a} + R}{2R}\bigg)
\end{align}for any $\tb{a}\in \big[-\frac{R}{\sqrt{d_H}},\frac{R}{\sqrt{d_H}}\big]^{d_H},$
and $|(f^{(1)}(\tb{a}), \dots, f^{(d_Y)}(\tb{a})) - \psi(\tb{a})|_2 < \delta.$

Now, define $\Gamma_{NN}(\tb{a}):=(f^{(1)}(\tb{a}), \dots, f^{(d_Y)}(\tb{a})),$ and set $\Gamma_{NN}\equiv 0$ outside $\big[-\frac{R}{\sqrt{d_H}},\frac{R}{\sqrt{d_H}}\big]^{d_H}$. Such
$\Gamma_{NN}$ satisfies
\begin{align*}
    \sup_{\tb{a}\in \big[-\frac{R}{\sqrt{d_H}},\frac{R}{\sqrt{d_H}}\big]^{d_H}}{|\Gamma_{NN}(\tb{a})-\psi(\tb{a})|_2} < \delta.
\end{align*}
Finally, we can bound I as
\begin{align}\label{bound I}
    &~~~~~\mathbb{E}_{\tb{x}\sim\mathbb{P}}\|\Psi_{NN}(\tb{x})-\Psi_\Delta(\tb{x})\|^2_Y\nonumber\\
    & = \int_{B_R(H)} \|\Psi_{NN}(\tb{x})-\Psi_\Delta(\tb{x})\|^2_Y~ \text{d}\mathbb{P}(\tb{x})\nonumber\\
    & = \int_{B_R(H)} \|\mathcal{R}^{d_Y}\circ \Gamma_{NN}\circ \mathcal{E}^{d_H}(\tb{x}) - \mathcal{R}^{d_Y}\circ \psi\circ \mathcal{E}^{d_H}(\tb{x})\|^2_Y~ \text{d}\mathbb{P}(\tb{x})\nonumber\\
    & \leq \int_{B_R(H)}|\Gamma_{NN}(\mathcal{E}^{d_H}(\tb{x})) - \psi(\mathcal{E}^{d_H}(\tb{x}))|^2_2 ~\text{d}\mathbb{P}(\tb{x}) <\delta^2.
\end{align}
Here we used $L_{\mathcal{R}^{d_Y}}=1$ and $\mathbb{P}(B_R(H))\leq 1$. 
Combining (\ref{bound I}) with (\ref{ineq:I+II}) and (\ref{ineq:II}), we finish the proof.

\end{proof}

\subsection{Approximation Theory of Leray-Hopf Solutions}\label{sec:proof}

In this subsection, we combine Lemma \ref{lemma:empirical proj} and Lemma \ref{lemma:auxi} to prove Theorem \ref{thm:ICNS_2D} and Theorem \ref{thm:ICNS_3D_lowdim} on the existence of a finite ReLU neural network that can approximate the Leray-Hopf solutions of the incompressible Navier-Stokes equations. 

\subsubsection{2D Leray-Hopf Solutions}

%The proof of Theorem \ref{thm:ICNS} The key step is that the Lipschitz constant we derived in Lemma \ref{ICNS:lips} is independent of the index $m$, which allows us to search for all $\Gamma_m$ such that $D_Y\circ \Gamma_m\circ E_X$ approximates $\Psi^m$ in the same network class $\mathcal{F}_{NN}$. 

Let $\epsilon>0$. In the case that $d=2$, thanks to Lemma \ref{ICNS:lips} and Remark \ref{2D_psi_lips}, the solution operator $\Psi:H\to Y$ is a Lipschitz operator and $L_\Psi = e^{CR^2}$ with $C=C(\nu)$ and $\nu$ is the viscosity coefficient. Suppose we have a training dataset $S(N)=\{\tb{x}_i, \tb{y}_i\}^N_{i=1}$, 
we first bound the projection error on $H$.
%One may choose sufficient amount of data $n$, dimension $D$ of encoded vector space of $H$ and dimension $N$ of encoded vector space of $Y$ to control both the network approximation error and the projection error by the accuracy $\epsilon$: 
\begin{align*}
\text{Pr} := \mathbb{E}_{\tb{x}\sim\mathbb{P}}\|\tb{x}-\Pi_{d_H}(\tb{x})\|^2_H &= \int_{B_R(H)}\|\sum^\infty_{i=d_H + 1}(\tb{x}, \tb{w}_i)\tb{w}_i\|^2_H ~\text{d}\mathbb{P}(\tb{x})\\
&= \sum^\infty_{i=d_H + 1}\int_{B_R(H)} |(\tb{x}, \tb{w}_i)|^2~\text{d}\mathbb{P}(\tb{x})
\end{align*}
Thus $\text{Pr}$ is non-negative and monotone decreasing as $d_H\to \infty$. We can choose $d_H$ and $N$ large enough such that $\text{Pr} + \sqrt{\frac{Q_Hd_H}{N}}< \frac{1}{16L_{\Psi}}\epsilon$. 
%By Lemma \ref{lemma:empirical proj},
%\begin{align}\label{bound:proj1}
%    \mathbb{E}_{\tb{x}_i\sim\mathbb{P}}[R_N(V)] < \frac{1}{16L_\Psi}\epsilon.
%\end{align}
Similarly, we choose $d_Y$ and further enlarge $N$ such that $\text{Pr}^{\Psi_\sharp\mathbb{P}} + \sqrt{\frac{Q_Yd_Y}{N}}<\frac{1}{16}\epsilon$. Note that the choice of $d_H,d_Y$ is independent of $N$.
%Again, by Lemma \ref{lemma:empirical proj}, 
%\begin{align}\label{bound:proj2}
%    \mathbb{E}_{\tb{y}_i\sim \Psi_\sharp\mathbb{P}}[R^{\Psi_\sharp\mathbb{P}}_N(V)]< \frac{1}{16}\epsilon. 
%\end{align}
Let $\delta = \dfrac{\sqrt{\epsilon}}{2}$ in Lemma \ref{lemma:auxi}, we complete the proof of Theorem \ref{thm:ICNS_2D}.
%by combining Lemma \ref{lemma:auxi}, (\ref{bound:proj1}) and (\ref{bound:proj2}).
%Next, we bound the projection error on $Y$. If $\tb{u}\in Y, \tb{u}\sim\Psi_\sharp\mathbb{P}$, $\tb{u}$ is a Leray-Hopf solution. Let $d_Y>0$ be the dimension of the vector space after encoding $Y$, 

%and $\tb{u}^{d_Y}$ be the $d_Y$-order Galerkin approximation of $\tb{u}$. We have 
%\begin{align}\label{bound:proj2ineq}
    %\mathbb{E}_{\tb{u}\sim\Psi_\sharp\mathbb{P}}\|\Pi_{d_Y}(\tb{u})-\tb{u}\|^2_Y &\leq \int_{\Psi(B_R(H))}\|\Pi_{d_Y}(\tb{u})-\Pi_{d_Y}(\tb{u}^{d_Y})\|^2_Y ~\text{d}\Psi_\sharp\mathbb{P}(\tb{u}) + \int_{\Psi(B_R(H))}\|\Pi_{d_Y}(\tb{u}^{d_Y}) - \tb{u}\|^2_Y ~\text{d}\Psi_\sharp\mathbb{P}(\tb{u})\\
%    &= 2\int_{\Psi(B_R(H))} \|\tb{u}-\tb{u}^{d_Y}\|^2_Y ~\text{d}\Psi_\sharp\mathbb{P}(\tb{u}) \leq 2\|\tb{u}-\tb{u}^{d_Y}\|^2_Y.
%\end{align}
%Here we have used $L_{\mathcal{P}^{d_Y}} = L_{\mathcal{R}^{d_Y}} = 1$, $\Pi_{d_Y}(\tb{u}^{d_Y}) = \tb{u}_{d_Y}$ and $\Psi_\sharp\mathbb{P}(\Psi(B_R(H)))\leq 1$. Now choose $d_Y$ large enough such that 
%\begin{align}\label{bound:proj2}
    %\mathbb{E}_{\tb{x}_i\sim\mathbb{P}}\mathbb{E}_{\tb{u}\sim\Psi_\sharp\mathbb{P}}\|\Pi_{d_Y}(\tb{u})-\tb{u}\|^2_Y < \dfrac{1}{16}\epsilon.
%\end{align}
\subsubsection{3D Leray-Hopf Solutions}\label{3D}

The major difference between 2D and 3D cases is that, we do not have the uniform boundedness of the Lipschitz constant $L_{\Psi^k}, k=1,2,\dots$ as discussed in Remark \ref{2D_psi_lips} and Remark \ref{3D_psi_lips}. When the Galerkin approximation order $k\to\infty$, the corresponding Lipschitz constant $L_{\Psi^k}\to\infty$ exponentially fast for each $\Psi^k$. This property can cause an extremely slow convergence of the approximation error, even failure in convergence. To alleviate the curse of dimensionality, we propose Assumption \ref{ass:lowdim} as a special case in 3D setting.

%Under Assumption \ref{ass:lowdim}, the dimension of the vector space after encoding $H$ is less or equal to $D$. Follow the construction of $u^k$ for each Galerkin order $k$, $u^k(t)$ lies in $\text{span}\{w_1,w_2,\dots\,w_D\}$. Thus the dimension of the vector space after decoding $D$. 
Let $\epsilon>0$ and
$\tb{u}_0\in \text{span}\{\tb{w}_1,\tb{w}_2,\dots\,\tb{w}_D\}$, the Galerkin approximations produce a sequence of solutions $\{\tb{u}^1,\tb{u}^2,\dots,\tb{u}^D,\tb{u}^D,\dots\}$ with all the rest terms being $\tb{u}^D$. Extracting a convergent subsequence, the 3D Leray-Hopf solution $\tb{u}$ under Assumption \ref{ass:lowdim} is equal to $\tb{u}^D$.
By Lemma \ref{ICNS:lips}, $\Psi=\Psi^D$ is a Lipschitz operator in terms of the initial function. 
%We can obtain the dimension $\Tilde{d}_Y>0$ such that (\ref{bound:proj2}) holds. And we define
%\begin{align}
%    d_H := D ,~~~\text{and}~~~d_Y:=\max\{\Tilde{d}_Y,D\}.
%\end{align}

Let $d_H= D$, then the projection error on $H$ vanishes. By Lemma \ref{lemma:empirical proj}, we can choose
$\Tilde{d_Y}$ and $N$ large enough such that 
\begin{align*}
\text{Pr}^{\Psi_\sharp\mathbb{P}} + \sqrt{\frac{Q_Y\Tilde{d_Y}}{N}} < \dfrac{1}{8}\epsilon.
\end{align*}
Let $\delta = \dfrac{\sqrt{\epsilon}}{2}$ in Lemma \ref{lemma:auxi} and choose $d_Y:=\max\{\Tilde{d}_Y,D\}$, we finish the proof of Theorem \ref{thm:ICNS_3D_lowdim}.
%\begin{align*}
%    \mathbb{E}_{\mathcal{S}(n)}\mathbb{E}_{u_0\sim\mathbb{P}}\|D^D\circ \Gamma_{NN}\circ E^D(u_0) - \Psi(u_0)\|^2_Y < \epsilon.
%\end{align*}
%The parameters of the network architecture $\mathcal{F}(N,L,p,M)$ are given by
%\begin{align*}
%    L=O(\Tilde{L}\log\Tilde{L}), ~p=O(\Tilde{p}\log\Tilde{p}),~ M = \sqrt{D}Re^{cR^2},~c=c(\nu)
%\end{align*}
%and $\Tilde{L},\Tilde{p}$ satisfies
%\begin{align*}
%    \Tilde{L}\Tilde{p} = \Big\lceil D^{-\frac{D}{4+2D}}n^{\frac{D}{4+2D}}\Big\rceil.
%\end{align*}
\begin{remark}
   When the initial function lies in a finite-dimensional subspace of $H$, the Lipschitzness of $\Psi$ in terms of initial function implies the uniqueness of 3D weak solution to the ICNS. 
\end{remark}

\section{Data Generation in 3D}\label{sec:data}
%We revisit the approximation theory to the 3D Leray-Hopf solutions under Assumption \ref{ass:lowdim} that the initial function $u_0$ lies in a finite dimensional subspace of $H$. We explain in details on data generation of the  $u_0$ in such a setting. Inspired by DeepONet \cite{lu_deeponet_2021}, the initial function can be viewed as the input function of unknown operators $\Psi$. In \cite{lu_deeponet_2021}, the authors estimate the number of sensors to achieve a good accuracy with input functions being Gaussian process for identifying the first order ODE in 1D. Later \cite{lanthaler_error_2022} analyzed the approximation error of DeepONet, and projected input functions from infinite dimensional space into an $m$-dimensional vector space, where $m$ denotes the number of sensors. In particular, $m\gg 1$.
%The high dimensionality of the input vector space affects the efficiency in realistic learning tasks. We adopt the concept of \textit{sensor} in a different setting: the input functions are encoded using eigenfunctions of Stokes operator into a $D$-dimensional vector space, independent of $m$. Given $\epsilon>0$, we aim to estimate the number of sensors to achieve approximation error $\epsilon$ based on the \textit{universal approximation theorem} \cite{chen_universal_1995}.
In this section, we prove Theorem \ref{thm:low bound}, in which we investigate how many sensors are needed to achieve accuracy $\epsilon$ for identifying weak solutions to the ICNS using neural networks with 1 hidden layer. 

Let $d_H, d_Y >0$. According to Section \ref{3D}, we assume that $d_H= D$. The initial function $\tb{u}_0$ satisfies Assumption \ref{ass:energy} and Assumption \ref{ass:lowdim}, i.e.
\begin{align}\label{data_u0}
    \tb{u}_0 = \sum^D_{i=1} a_i \tb{w}_i,~~\text{with each}~~a_i:=\langle \tb{u}_0, \tb{w}_i\rangle,~\text{and}~~
    \|\tb{u}_0\|_H = \big(\sum^D_{i=1} a^2_i\big)^{\frac{1}{2}} \leq R
\end{align}
where $\tb{w}_1,\dots,\tb{w}_D$ are the first $D$ components of the orthonormal basis $\{\tb{w}_i\}^\infty_{i=1}$ of $H$. In practice, we generate $\tb{u}_0$ by randomly sampling $\tb{a}$ from the compact subset $$\textbf{a}:=(a_1,\dots,a_D)\in\bigg[-\frac{R}{\sqrt{D}},\frac{R}{\sqrt{D}}\bigg]^{D}.$$
%to ensure that $\tb{u}_0$ satisfies Assumption \ref{ass:energy}: 
%\begin{align}\label{ineq:a}
%    \|\tb{u}_0\|_H = \|\sum^D_{i=1} a_i \tb{w}_i\|_H = \big(\sum^D_{i=1} a^2_i\big)^{\frac{1}{2}} \leq R.
%\end{align}

%Let $S(N)=\{u_j\}^N_{j=1}$ be a training set sampled from the probability measure $\mathbb{P}$. For a fixed positive integer $D$, let $E^D:H\to \mathbb{R}^D$ be the encoder defined in () such that 

As we consider the 3D problem in periodic domain, one can choose uniformly $m+1$ points $\tb{x}_0,\tb{x}_1,\dots,\tb{x}_m \in [-\pi,\pi]^3$ as location of sensors. For each $\tb{w}_i\in C^\infty_c(\T^3)$, $i=1,2,\dots, D$, we define a function $\tb{w}^m_i(\tb{x})$ to be the trilinear interpolation of $\tb{w}_i(\tb{x})$ based on $m+1$ sensors $\tb{x}_0,\tb{x}_1,\dots,\tb{x}_m$.
Denote the operator mapping $\tb{w}_i$ to $\tb{w}^m_i$ by $T_m$, then $T_m$ is a continuous operator. 
Moreover, by Taylor expansion up to the second order, there exists a constant $\kappa(D,m)\sim \frac{D}{m}$ such that
\begin{align}
    \max_{\tb{x}\in [-\pi,\pi]^3} |\tb{w}_i(\tb{x})-\tb{w}^m_i(\tb{x})|\leq \kappa(D,m),
\end{align}
and $\kappa(D,m)\to 0$ as $m\to\infty$.
For each $\tb{u}_0$ in (\ref{data_u0}), we define $\tb{u}^m_0$ based on $m$ sensors as:
\begin{align}\label{u^m_0:sensor}
    \tb{u}^m_0(\tb{x}) := \sum^D_{i=1}a_i \tb{w}^m_i(\tb{x}).
\end{align}

Let $\Psi:H\to Y$ be the operator discussed in Section \ref{3D}. Then $\Psi$ is a Lipschitz operator with $$L_\Psi = L_{\Psi^D} = e^{CR^4\lambda_D}$$ where $C = C(\nu)$ and $\nu$ is the viscosity coefficient. Let  $\mathcal{D}^D:\mathbb{R}^D\to H$ be the decoder on $H$ defined in (\ref{decoder}) and  $\mathcal{P}^{d_Y}: Y\to\mathbb{R}^{d_Y}$ be the encoder on $Y$ defined in (\ref{Y:encoder}). By Lemma \ref{encoder lips}, $\mathcal{P}^{d_Y}$ is Lipschitz with Lipschitz constant 1. We can derive that
\begin{align}\label{psi_error}
    &~~~~\|\mathcal{P}^{d_Y}\circ\Psi\circ\mathcal{D}^D(\tb{a})-\mathcal{P}^{d_y}\circ\Psi\circ T_m\circ\mathcal{D}^D(\tb{a})\|_2 \nonumber\\
    &\leq e^{CR^4\lambda_D} \|\tb{u}_0-\tb{u}^m_0\|_H 
    \leq C_1e^{CR^4\lambda_D}\kappa(D,m),
\end{align}
where $C_1=C_1(D,R)>0$.
Let $V\subset \text{span}\{\tb{w}_1,\dots,\tb{w}_D\}$ such that $V$ contains all the possible initial functions $\tb{u}_0$. Then $V$ is a compact subset as it is the image of a continuous map on a compact domain:  $$V = \bigg\{\mathcal{D}^D(\tb{a}): \tb{a}\in \bigg[-\frac{R}{\sqrt{D}},\frac{R}{\sqrt{D}}\bigg]^D \bigg\}.$$
We define 
\begin{align*}
    G_m &:= \{(\tb{u}_0(\tb{x}_0),\tb{u}_0(\tb{x}_1),\dots,\tb{u}_0(\tb{x}_m))^T~|~\tb{u}_0\in V\}\subset\mathcal{M}_\mathbb{R}(m+1, 3),\\
    V^m &:=\{\tb{u}^m_0 ~|~\tb{u}_0\in V\},
\end{align*}
where $\mathcal{M}_\mathbb{R}(m+1, 3)$ denotes the set of all real $(m+1)\times 3$ matrices. 
%$G_m$ is a compact subset of $\mathcal{M}_\mathbb{R}(m+1, 3)$ because each $\tb{u}_0$ is a linear combination of continuous maps $\tb{w}_i$ and the evaluation map  ...
Naturally there is a bijection between $G_m$ and $V^m$, and $\mathcal{D}^D$ is also a bijection between $\big[-\frac{R}{\sqrt{D}},\frac{R}{\sqrt{D}}\big]^D$ and $V$. 

Define $\phi_m:\mathbb{R}^D \to \mathbb{R}^{d_Y}$ 
by
\begin{align*}
    \phi_m(\tb{a}) := \mathcal{P}^{d_Y}\circ\Psi\circ T_m\circ\mathcal{D}^D(\tb{a}).
\end{align*}

Then $\phi_m$ is a continuous function. Given an approximation error $\epsilon$, choose the number of sensors $m$ such that the right hand side of (\ref{psi_error}) satisfies
\begin{align}\label{ineq:m}
    C_1e^{CR^4\lambda_D}\kappa(D,m)<\frac{1}{2} \epsilon.
\end{align}
We can apply the universal approximation theorem \cite{chen_universal_1995} which states that there exist a ReLU neural network with weights and bias $$W_1\in \mathbb{R}^{k\times D},\;\; b_1\in\mathbb{R}^D,\;\; W_2\in\mathbb{R}^{{d_Y}\times k},\;\;b_2\in\mathbb{R}^{d_Y},$$ such that
\begin{align*}
    \|\phi_m(\tb{a}) - (W_2\cdot\sigma(W_1\cdot\textbf{a}^T)+ b_2)\|_2
    <\dfrac{1}{2} \epsilon - C_1e^{CR^4\lambda_D}\kappa(D,m)
\end{align*}
holds for all $\tb{a}\in\big[-\frac{R}{\sqrt{D}},\frac{R}{\sqrt{D}}\big]^D$, then we have
\begin{align}\label{ineq:p,psi,d}
    &~~~~~\|\mathcal{P}^{d_Y}\circ\Psi\circ\mathcal{D}^D(\tb{a}) - (W_2\cdot\sigma(W_1\cdot\textbf{a}^T)+ b_2)\|_2\nonumber\\
    &\leq \|\mathcal{P}^{d_Y}\circ\Psi\circ\mathcal{D}^D(\tb{a})-\mathcal{P}^{d_Y}\circ\Psi\circ T_m\circ\mathcal{D}^D(\tb{a})\|_2\nonumber\\
    &~~~~~+ \|\mathcal{P}^{d_Y}\circ\Psi\circ T_m\circ\mathcal{D}^D(\tb{a}) - (W_2\cdot\sigma(W_1\cdot\textbf{a}^T)+ b_2)\|_2\nonumber\\
    &< C_1e^{CR^4\lambda_D}\kappa(D,m)+\dfrac{1}{2}\epsilon-C_1e^{CR^4\lambda_D}\kappa(D,m) = \dfrac{1}{2}\epsilon.
\end{align}
Suppose $d_Y$ is chosen sufficiently large such that for any $\tb{u}_0\in V$,
\begin{align}\label{ineq:proj sec4}
    \|\Psi(\tb{u}_0)-\Pi_{d_Y}(\Psi(\Pi_D(\tb{u}_0)))\|_Y&\leq \|\Psi(\tb{u}_0)-\Pi_{d_Y}(\Psi(\tb{u}_0)) \|_Y+ \|\Pi_{d_Y}(\Psi(\tb{u}_0)) - \Pi_{d_Y}(\Psi(\Pi_D(\tb{u}_0)))\|_Y\nonumber\\
    &= \|\Psi(\tb{u}_0)-\Pi_{d_Y}(\Psi(\tb{u}_0)) \|_Y <\dfrac{1}{2}\epsilon. 
\end{align}

Recall the definition of $\mathcal{E}^{d_H},\mathcal{R}^{d_Y}$ in (\ref{encoder})-(\ref{Y:encoder}) and Lemma \ref{encoder lips}, we combine (\ref{ineq:p,psi,d}) and (\ref{ineq:proj sec4}) to get
\begin{align*}
    &~~~~~\|\Psi(\tb{u}_0) - \mathcal{R}^{d_Y}\circ (W_2\cdot\sigma(W_1\cdot\mathcal{E}^D(\tb{u}_0)^T)+ b_2)\|_Y\\
    &\leq \|\Psi(\tb{u}_0)-\Pi_{d_Y}(\Psi(\Pi_D(\tb{u}_0)))\|_Y + \|\Pi_{d_Y}(\Psi(\Pi_D(\tb{u}_0))) - \mathcal{R}^{d_Y}\circ (W_2\cdot\sigma(W_1\cdot\mathcal{E}^D(\tb{u}_0)^T)+ b_2)\|_Y\\
    &< \dfrac{1}{2}\epsilon + \dfrac{1}{2}\epsilon = \epsilon
\end{align*}
holds for any $\tb{u}_0 = \mathcal{D}^D(\tb{a})\in V$, and $\tb{a}\in \big[-\frac{R}{\sqrt{D}},\frac{R}{\sqrt{D}}\big]^D$. Therefore, we conclude that the number of sensors $m$ should at least satisfy (\ref{ineq:m}) to achieve an approximation accuracy $\epsilon$.

%\bibliography{op_bib}
%\bibliographystyle{numeric}
\nocite{*}
\printbibliography
\end{document}